\newcommand{\authorfootnotes}{\renewcommand\thefootnote{\@fnsymbol\c@footnote}}%
\newtheorem{thm}{Theorem}[section]
\newtheorem{lemma}[thm]{Lemma}
\newtheorem{proposition}[thm]{Proposition}
\newtheorem{corollary}[thm]{Corollary}
\newtheorem{remark}[thm]{Remark}
\numberwithin{equation}{section}
\newcommand{\norm}[1]{\ensuremath{N(\ideal{#1})}}
\begin{document}

\def\q{\mathfrak{q}}
\def\p{\mathfrak{p}}
\def\l{\mathfrak{l}}
\def\u{\mathfrak{u}}
\def\a{\mathfrak{a}}
\def\b{\mathfrak{b}}
\def\m{\mathfrak{m}}
\def\n{\mathfrak{n}}
\def\r{\mathfrak{r}}
\def\c{\mathfrak{c}}
\def\d{\mathfrak{d}}
\def\e{\mathfrak{e}}
\def\k{\mathfrak{k}}
\def\z{\mathfrak{z}}
\def\h{{\mathfrak h}}
\def\gl{\mathfrak{gl}}
\def\sl{\mathfrak{sl}}

\def\Ext{{\rm Ext}}
\def\Hom{{\rm Hom}}
\def\Ind{{\rm Ind}}

\def\res{\mathop{Res}}

\def\GL{{\rm GL}}
\def\SL{{\rm SL}}
\def\SO{{\rm SO}}
\def\O{{\rm O}}

\def\R{\mathbb{R}}
\def\C{\mathbb{C}}
\def\Z{\mathbb{Z}}
\def\N{\mathbb{N}}
\def\Q{\mathbb{Q}}
\def\A{\mathbb{A}}
\def\D{\mathbb{D}}
\def\Re{\text{Re}}
\def\Im{\text{Im}}

\def\w{\wedge}

\def\Cat{\mathcal{C}}
\def\HC{{\rm HC}}
\def\HCat{\Cat^\HC}
\def\proj{{\rm proj}}

\def\to{\rightarrow}
\def\To{\longrightarrow}

\def\1{1\!\!1}
\def\dim{{\rm dim}}

\def\th{^{\rm th}}
\def\isom{\approx}

\def\CE{\mathcal{C}\mathcal{E}}
\def\E{\mathcal{E}}

\def\dis{\displaystyle}
\def\f{{\bf f}}                 
\def\g{{\bf g}}
\def\T{{\rm T}}              
\def\omegatil{\tilde{\omega}}  
\def\H{\mathcal{H}}            
\def\Dif{\mathfrak{D}}      
\def\W{W^{\circ}}           
\def\Whit{\mathcal{W}}      
\def\ringO{\mathcal{O}}     
\def\S{\mathcal{S}}      
\def\M{\mathcal{M}}      
\def\K{{\rm K}}          
\def\h{\mathfrak{h}} 
\def\norm{{\rm N}}       
\def\trace{{\rm Tr}} 
\def\ctilde{\tilde{C}}

\title{A Note on Large Sums of Divisor-Bounded Multiplicative Functions}

\author{Claire Frechette}
\address[Claire Frechette]{Boston College, Department of Mathematics, Chestnut Hill, MA 02467, USA}
\email{frechecl@bc.edu}

\author{Mathilde Gerbelli-Gauthier}
\address[Mathilde Gerbelli-Gauthier]{McGill University, Department of Mathematics, Montr\'eal, QC H3A 2K6, Canada}
\email{mathilde.gerbelli-gauthier@mcgill.ca}

\author{Alia Hamieh}
\address[Alia Hamieh]{University of Northern British Columbia, Department of Mathematics and Statistics, Prince George, BC V2N 4Z9, Canada}
\email{alia.hamieh@unbc.ca}

\author{Naomi Tanabe}
\address[Naomi Tanabe]{Bowdoin College,  Department of Mathematics, Brunswick, ME 04011, USA}
\email{ntanabe@bowdoin.edu}

\keywords{ Mean values, Multiplicative functions, Hal\'asz's theorem, Modular forms, Sums of Fourier coefficients}

\subjclass[2010]{Primary 11F30; secondary 11F11, 11F12, 11M41}
\date{\today}
\thanks{The research of Claire Frechette is supported by NSF grant DMS-2203042, and the research of Alia Hamieh is  supported by NSERC Discovery grant RGPIN-2018-06313.}

\begin{abstract}
Given a multiplicative function $f$, we let $S(x,f)=\sum_{n\leq x}f(n)$ be the associated partial sum. In this note, we show that lower bounds on partial sums of divisor-bounded functions result in lower bounds on the partial sums associated to their products.  More precisely, we let $f_j$, $j=1,2$ be such that $|f_j(n)|\leq \tau(n)^\kappa$ for some $\kappa\in\mathbb{N}$, and assume their partial sums satisfy $\left|S(x_j,f_j)\right|\geq \eta x_j (\log x_j)^{2^\kappa-1}$ for some $x_1, x_2\gg 1$ and $\eta>\max_j\{(\log x_j)^{-1/100}\}$.
We then show that there exists $x\geq \min\{x_1, x_2\}^{\xi^2}$ such that 
$\left|S(x,f_1f_2)\right|\geq \xi x (\log x)^{2^{2\kappa}-1}$, where $\xi=C\eta^{1+2^{\kappa+3}}$ for some absolute constant $C>0$.

\end{abstract}

\maketitle

\section{Introduction}

A central theme in analytic number theory is studying mean values of multiplicative functions. One enduring quest in this area, which has garnered extensive research over the past century and continues to attract significant interest to date, revolves around understanding the asymptotic behaviour of character sums. Let $\chi$ be a primitive Dirichlet character modulo $q$ and let $S(x,\chi)=\sum_{n\leq x}\chi(n)$ be the associated character sum. Trivially, we have the bound \[|S(x,\chi)|\leq \min(x,q).\] 
An important result proven independently by P\'olya and Vinogradov \cite[pages~135--137]{davenport} asserts that 
\[\max_{x}\left|S(x,\chi)\right|\ll \sqrt{q}\log q,\] 
which was further improved to 
\[\max_{x}\left|S(x,\chi)\right|\ll \sqrt{q}\log\log q,\] 
by assuming the Generalized Riemann Hypothesis (GRH) for Dirichlet $L$-functions thanks to the work of Montgomery and Vaughan \cite{montgomery-vaughan-paper}. It follows that 
\begin{equation}\label{eqn:o(x)}S(x,\chi)=o(x)\end{equation} 
when $x\geq q^{\frac12+\epsilon}$. 
In 1957, Burgess established \eqref{eqn:o(x)} in the range $x>q^{\frac14+o(1)}$ for any quadratic character~$\chi$ when $q$ is prime. This was generalized by Burgess himself to any non-principal character provided that $q$ is cubefree, and in the smaller range $x>q^{\frac38+o(1)}$ otherwise. Burgess's range has not been substantially improved over the last five decades, although it is widely believed that \eqref{eqn:o(x)} should hold in the wider range $x\gg_{\epsilon}q^{\epsilon}$. In 2001, Granville and Soundararajan  \cite{GS1}  proved that, for  a primitive character $\chi\mod q$, \eqref{eqn:o(x)} holds in the range $\log x/\log\log q\to\infty$ assuming the GRH for $L(s,\chi)$.  In~\cite{GS3}, they also showed that this asymptotic holds under the weaker assumption that ``100\%" of the zeros of $L(s,f)$ up to height $\frac14$ lie on the critical line. This work of Granville and Soundararajan is of particular interest to us because it makes strong connections between character sums and zeros of the associated $L$-functions using general results from multiplicative number theory pertaining to mean values of 1-bounded multiplicative functions in the pretentious framework. 
In \cite{GS3}, the authors record various other interesting observations on large character sums applying variants of Halasz's theorem and Lipschitz estimate as developed in \cite{GS2}. For example, they show that if $\chi_1$ and $\chi_2$ have large character sums then so does their product $\chi_1\chi_2$. This result follows as a direct application of  \cite[Theorem 6.2]{GS3} which asserts that if the partial sums of two completely multiplicative 1-bounded functions are large, then the partial sum of their product is large as well. In this note, we extend this theorem to multiplicative functions valued outside of the unit disk, but whose magnitude is bounded by a power of the divisor function.

\begin{thm}
	\label{thm:main1}
Let $\kappa$ be a positive integer. Let $f_1$ and $f_2$ be multiplicative functions with $|f_j(n)|\leq \tau(n)^\kappa$ for all $n\in \N$. Suppose that, for $x_1, x_2\gg 1$, there exists $\eta>\max_j\{(\log x_j)^{-1/100}\}$ such that
\begin{equation}\label{eqn:eta_sum} 
\left|\sum_{n\leq x_j}f_j(n)\right|\geq \eta x_j (\log x_j)^{2^\kappa-1}.
\end{equation}
Then, with $\xi=C\eta^{1+2^{\kappa+3}}$ for some absolute constant $C>0$, there exists $x\geq \min\{x_1, x_2\}^{\xi^2}$ such that 
\[ \left|\sum_{n\leq x}f_1(n)f_2(n)\right|\geq \xi x (\log x)^{2^{2\kappa}-1}.\]
\end{thm}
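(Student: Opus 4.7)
The plan is to follow the pretentious strategy of Granville and Soundararajan, extending \cite[Theorem 6.2]{GS3} from the $1$-bounded to the $\tau^\kappa$-bounded regime. The natural archimedean model for a $\tau^\kappa$-bounded multiplicative function is the generalized divisor function $d_{2^\kappa}(n)$, whose partial sums have size $x(\log x)^{2^\kappa-1}$, matching exactly the normalization in \eqref{eqn:eta_sum}. I would work with a pretentious distance
\[
\mathbb{D}(f, d_{2^\kappa} n^{it}; x)^2 \;=\; \sum_{p\leq x}\frac{2^\kappa - \operatorname{Re}(f(p)p^{-it})}{p},
\]
analogous to the classical one, together with its triangle-type inequality; the algebraic identity $d_{2^\kappa}(p)\cdot d_{2^\kappa}(p) = 2^{2\kappa} = d_{2^{2\kappa}}(p)$ at primes is precisely what will let the product $f_1f_2$ pretend to $d_{2^{2\kappa}} n^{i(t_1+t_2)}$.

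The argument then proceeds in three stages. First, a Hal\'asz-type theorem adapted to $\tau^\kappa$-bounded functions: the hypothesis \eqref{eqn:eta_sum} should force the existence of a real $t_j$ with $|t_j|\leq (\log x_j)^{O(1)}$ and $\mathbb{D}(f_j, d_{2^\kappa} n^{it_j}; x_j)^2 \ll \log(1/\eta)$. Second, the triangle inequality combines these two approximations to bound $\mathbb{D}(f_1f_2, d_{2^{2\kappa}} n^{i(t_1+t_2)}; x)$. Third, a converse Hal\'asz/Lipschitz estimate in the style of \cite{GS2}, extended to $\tau^{2\kappa}$-bounded multiplicative functions, converts this small distance into a genuine lower bound $|S(x,f_1f_2)|\gg \xi x(\log x)^{2^{2\kappa}-1}$ at an appropriate scale. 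The need to choose $x$ large enough for the archimedean twist $n^{i(t_1+t_2)}$ not to cancel the main term is what forces the lower bound $x \geq \min\{x_1,x_2\}^{\xi^2}$.

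The main obstacle is extending the pretentious toolkit—Hal\'asz's inequality and the Lipschitz estimate—to the divisor-bounded regime with constants sharp enough to produce the explicit exponent $1+2^{\kappa+3}$ in $\xi = C\eta^{1+2^{\kappa+3}}$. Two points are especially delicate: (i) the Hal\'asz direction must yield $t_j$ with polynomial control on $|t_j|$, so that $t_1+t_2$ remains workable; and (ii) the Lipschitz direction requires Selberg--Delange-type asymptotics for $d_{2^{2\kappa}}(n) n^{it}$ in order to transfer closeness on primes into closeness of partial sums while preserving the $(\log x)^{2^{2\kappa}-1}$ normalization. The cumulative loss across these three stages accounts for both the power of $\eta$ appearing in $\xi$ and the exponent $\xi^2$ in the scale bound.
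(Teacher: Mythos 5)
Your proposal follows essentially the same route as the paper: introduce the pretentious distance $\mathbb{D}^2(f,n^{it};x)=\sum_{p\leq x}\frac{2^\kappa-\Re(f(p)p^{-it})}{p}$, establish a triangle-type inequality so that $f_1f_2$ is close to $n^{i(t_1+t_2)}$ (with $2^\kappa$ replaced by $2^{2\kappa}$), run Hal\'asz forward to extract the twists $t_j$ and bound the distance by $O(\log(1/\eta))$, and then invoke a converse Hal\'asz/Lipschitz estimate to turn the small distance into a large partial sum at some scale $y\geq x^{\xi^2}$. Two small refinements in the paper are worth noting: the forward Hal\'asz step yields the sharper bound $|t_j|\ll 1/\eta$ rather than merely polynomial control in $\log x_j$ — this is essential, since the converse estimate divides by $1+|t_1+t_2|$ and a polylogarithmic loss there would destroy the target exponent; and the converse direction is carried out not via Selberg--Delange asymptotics but by a self-contained contradiction argument comparing an Euler-product lower bound for $L(1+\frac{2\lambda}{\log x}+i\psi,f)$ against a Mellin-transform upper bound derived from the (assumed-false) smallness of the partial sums.
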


Theorem \ref{thm:main1} is proved following the general structure of the argument of \cite[Theorem 6.1]{GS3}, but we extend it to divisor-bounded multiplicative functions. The technical results employed in the proofs of the various theorems in \cite{GS3} have their roots in the work of Hal\'asz \cite{halasz1,halasz2} and the subsequent works of Montgomery \cite{montgomery}, Tenenbaum \cite{Tenenbaum}, Granville and Soundararajan \cite{GS2}, among other mathematicians, on mean values of multiplicative functions that take their values in the complex unit disc. These results have been extended to a broad class of divisor-bounded multiplicative functions by Granville, Harper and  Soundararajan in \cite{GHS1}, Mangerel in \cite{MangerelThesis,Mangerel}, and Matthiesen in \cite{matthiesen}. Our work in the present paper and our previous work \cite{fght1} hinges upon such generalizations and is motivated by trying to study a $\mathrm{GL}_{2}$-analogue of \eqref{eqn:o(x)}. In fact, for  a primitive cusp form $g(z)=\sum_{n\geq1}\lambda_g(n)n^{\frac{k-1}{2}}e^{2\pi i n z}$ in $S_{k}(1)$, Lamzouri \cite[Corollary~1.2]{Lamzouri} proved that \begin{equation}\label{eqn:o(xlogx)}\sum_{n\leq x}\lambda_g(n)=o(x\log x)\end{equation} holds in the range $\log x/\log\log k\to\infty$ assuming the GRH for $L(s,g)$. He also proved unconditionally that this range in $x$ is best possible \cite[Corollary 1.4]{Lamzouri}. In \cite{fght1}, we extend the methods of \cite{GS3} to show that \eqref{eqn:o(xlogx)} holds in the range $x\geq k^{\epsilon}$ under a weaker assumption than GRH. In connection with the main result of this paper, we  derive the following immediate consequence for large sums of Fourier coefficients of primitive cusp forms, which can be viewed as a $\GL_2$-analogue of \cite[Corollary~1.7]{GS3}.

\begin{corollary}\label{cor:main}
Let $g\in S_k(r)$ and $h\in S_{\ell}(q)$ be  primitive cusp forms whose Fourier coefficients are denoted by $\lambda_g(n)$ and $\lambda_h(n)$, respecctively. Suppose that 
\[\left|\sum_{n\leq x_1}\lambda_{g}(n)\right|\geq \eta x_1\log x_1\quad \text{and}\quad  \left|\sum_{n\leq x_2}\lambda_{h}(n)\right|\geq \eta x_2\log x_2,\] for some $x_1,x_2\gg1$
and $\eta>\max_j\{(\log x_j)^{-1/100}\}$. Then there exists $x\geq (\min(x_1,x_2))^{\xi^2} $ such that
\[\left|\sum_{n\leq x_1}\lambda_{g}(n)\lambda_{h}(n)\right|\geq \xi x(\log x)^3,\] where $\xi=C\eta^{17}$ for some absolute positive constant $C$. \end{corollary}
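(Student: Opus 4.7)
The plan is a direct application of Theorem \ref{thm:main1} with $\kappa = 1$, $f_1 = \lambda_g$, and $f_2 = \lambda_h$; the whole corollary reduces to matching the hypotheses and the exponents.

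First I would verify the divisor-bounded assumption. Because $g$ and $h$ are primitive cusp forms (normalized Hecke eigenforms), their Fourier coefficients $\lambda_g$ and $\lambda_h$ are multiplicative, and Deligne's proof of the Ramanujan--Petersson conjecture yields the pointwise bound $|\lambda_g(n)| \leq \tau(n)$ for every $n \in \N$, and similarly for $\lambda_h$. This is exactly the hypothesis $|f_j(n)| \leq \tau(n)^\kappa$ of Theorem \ref{thm:main1} in the case $\kappa = 1$.

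Next, specializing Theorem \ref{thm:main1} to $\kappa = 1$ turns \eqref{eqn:eta_sum} into the estimate $\left|\sum_{n \leq x_j} f_j(n)\right| \geq \eta x_j (\log x_j)^{2^1 - 1} = \eta x_j \log x_j$, which is precisely the lower bound assumed in the corollary. The theorem then delivers some $x \geq (\min\{x_1, x_2\})^{\xi^{2}}$ for which
\[\left|\sum_{n \leq x} \lambda_g(n)\lambda_h(n)\right| \geq \xi x (\log x)^{2^{2\kappa} - 1} = \xi x (\log x)^{3},\]
and the constant $\xi = C\eta^{1 + 2^{\kappa+3}} = C\eta^{1 + 2^{4}} = C \eta^{17}$ matches the one in the statement. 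Since the argument is a one-line deduction from the main theorem, there is no real obstacle; the only point to be careful about is that Deligne's bound must be applied at every integer $n$ (not merely at primes, where one only gets $|\lambda_g(p)|\leq 2$) to obtain the full divisor-type bound needed for $\kappa = 1$.
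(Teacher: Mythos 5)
Your proof is correct and is exactly the intended deduction: the paper presents the corollary as an immediate consequence of Theorem \ref{thm:main1} with $\kappa=1$, and your verification of the divisor bound $|\lambda_g(n)|\leq\tau(n)$ via Deligne and the arithmetic $2^\kappa-1=1$, $2^{2\kappa}-1=3$, $1+2^{\kappa+3}=17$ matches what the authors have in mind.
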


The paper is organized as follows. In Section \ref{sec: Setting and Preliminary Results}, we record versions of Hal\'asz's Theorem and Lipschitz Formula that are suitable for  our setup. We also prove some consequences of these results that are crucial for the proof of our main theorem. In Section \ref{sec;technical}, we introduce a distance function for divisor-bounded multiplicative functions, for which we prove a triangle inequality-type result. We also derive a lower bound on sums of multiplicative functions in terms of the distance function. These results form the main ingredients of the proof of Theorem \ref{thm:main1}, which appears in Section \ref{sec:proof}.

 \vskip .1in
\noindent {\bf{Conventions and Notation.}} In this work, we adopt the following conventions and notation. Given two functions $f(x)$ and $g(x)$ we write $f(x) \ll g(x)$, $g(x) \gg f(x)$ or $f(x) = O(g(x))$ to mean there exists some positive constant $M$ such that $|f(x)| \leq M |g(x)|$ for $x$ large enough. The notation $f(x) \asymp g(x)$ is used when both estimates $f(x) \ll g(x)$ and $f(x) \gg g(x)$ hold simultaneously. We write $f(x) = o(g(x))$ when $g(x) \neq 0$ for
sufficiently large $x$ and $\displaystyle{\lim_{x\to\infty} \frac{f(x)}{g(x)} = 0}$.  The letter $p$ will be exclusively used to represent a prime number.

 \section{Hal\'asz's Theorem and Consequences } \label{sec: Setting and Preliminary Results}
 Let $f(n)$ be a multiplicative function satisfying 
 \begin{equation}\label{eqn:divisor-bounded} |f(n)|\leq \tau(n)^{\kappa}\end{equation}
 for all $n\in \N$, and let $L(s, f)$ be its associated Dirichlet $L$-series given by 
 \[ L(s, f)=\sum_{n\geq 1} \frac{f(n)}{n^s}, \quad \Re(s)>1.\]

We begin with presenting slightly modified versions of Hal\'asz's Theorem and its corollary, following the ideas of  \cite[Theorem~1.1, Corollary~1.2]{GHS1}.  
 
\begin{thm} \label{thm:Halasz} Let $f$ be a multiplicative function such than $|f(n)| \leq \tau(n)^\kappa$ for all $n\in \N$. Let $x$ be sufficiently large. Then for any $1\ll T\leq x^{\frac{9}{10}}$, we have \[
\frac{1}{x}\sum_{n\leq x} f(n) \ll \frac{1}{\log x}\int_{\frac{1}{\log x}}^1\left(\max_{|t|\leq T}\left|\frac{L(1+\sigma+it,f)}{1+\sigma+it}\right|\right)\frac{d\sigma}{\sigma} +O\left(\frac{(\log T)^{2^\kappa}}{\log x}+\frac{(\log x)^{2^\kappa}}{T}\right).
\]
\end{thm}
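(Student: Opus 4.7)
The proof follows the approach of Granville, Harper, and Soundararajan for divisor-bounded multiplicative functions, as in \cite{GHS1}.

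First, I would apply a truncated Perron's formula along $\Re s = c := 1 + 1/\log x$:
\[
\sum_{n \leq x} f(n) = \frac{1}{2\pi i} \int_{c-iT}^{c+iT} L(s, f) \frac{x^s}{s}\, ds + E.
\]
Using $|f(n)| \leq \tau(n)^\kappa$ and the standard estimate $\sum_{n \geq 1} \tau(n)^\kappa/n^c \ll (\log x)^{2^\kappa}$, Perron's effective error bound yields $E \ll x(\log x)^{2^\kappa}/T$, which matches the second error term of the theorem.

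Next, I would bound the truncated Perron integral by the stated $\sigma$-integral. The crucial technical step is to express the $t$-integral on the single line $\Re s = c$ as a weighted integral over vertical lines $\Re s = 1 + \sigma$ with $\sigma \in [1/\log x, 1]$. This is accomplished via a Hal\'asz--Montgomery-type maximal inequality combined with a Lipschitz-type estimate controlling the growth of $|L(s, f)|$ as $\Re s \to 1^+$; both ingredients are developed for divisor-bounded multiplicative functions in \cite{GHS1}. A dyadic decomposition in $\sigma$ naturally produces the weight $d\sigma/\sigma$ and the prefactor $1/\log x$. The contribution from small $|t|$ (say $|t| \leq 1$), where the averaged bound is too weak, is estimated separately using the direct bound $|L(1+\sigma+it, f)| \ll \sigma^{-2^\kappa}$, and this produces the first error term $O((\log T)^{2^\kappa}/\log x)$.

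The main obstacle will be executing this middle step cleanly. In the $1$-bounded setting, the transfer argument is the classical Hal\'asz--Montgomery theorem, where $|L|$ grows like $\sigma^{-1}$ near $\Re s = 1$; for divisor-bounded $f$, the growth $|L(s,f)| \ll \sigma^{-2^\kappa}$ is much faster, and must be carefully balanced against the weight $d\sigma/\sigma$ and the normalization $1/\log x$. This balancing is the content of the refinements in \cite{GHS1}. Once the transfer is established, the theorem follows by combining the Perron truncation error $O(x(\log x)^{2^\kappa}/T)$, the transferred main $\sigma$-integral, and the small-$|t|$ error $O(x(\log T)^{2^\kappa}/\log x)$ and dividing by $x$.
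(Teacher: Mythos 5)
There is a genuine gap, and it is the central mechanism. You start with truncated Perron along a single line $\Re s = 1 + 1/\log x$ and then propose to ``transfer'' the resulting $t$-integral into the $\sigma$-integral appearing in the theorem, via ``a Hal\'asz--Montgomery-type maximal inequality combined with a Lipschitz-type estimate.'' No such transfer exists. Since $L(s,f)$ need not continue past $\Re s = 1$, you cannot deform the contour, and a $t$-integral on one vertical line does not become an integral over a family of lines $\Re s = 1+\sigma$, $\sigma \in [1/\log x, 1]$, by any maximal inequality or Lipschitz estimate. The Lipschitz estimates of \cite{GHS1} compare $\frac{1}{x}\sum_{n\le x}f(n)n^{-i\phi}$ at nearby scales $x$ and $x/w$ (as used later in this paper's Theorem~\ref{prop:lipschitz}); they do not move a contour in $\sigma$.

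The actual mechanism behind Theorem~\ref{thm:Halasz} is quite different, and the paper's one-line proof points to it: it follows from \cite[Eqs.~2.1, 2.2, \& 2.9]{GHS1} with $y=T^2$. In that framework one does not invoke Perron at all. One writes $f(n)\log n$ (or an iterated version, $f(n)(\log n)^k$) as a Dirichlet convolution of $f$ with prime-supported functions bounded by $\Lambda$, applies a Parseval/Plancherel identity of the form
\[
\int_1^\infty \Big|\sum_{n\le t} g(n)\Big|^2 \frac{dt}{t^{3+2\sigma_0}} = \frac{1}{2\pi}\int_{-\infty}^\infty \left|\frac{L(1+\sigma_0+it,g)}{1+\sigma_0+it}\right|^2 dt,
\]
and then averages or iterates over the abscissa $\sigma_0$. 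It is this iteration over $\sigma_0$ that produces the weight $d\sigma/\sigma$ and the prefactor $1/\log x$ in the theorem; the $\sigma$-integral is intrinsic to the Plancherel/smoothing structure, not the residue of a contour argument. The two error terms, $(\log T)^{2^\kappa}/\log x$ and $(\log x)^{2^\kappa}/T$, arise from the truncation parameter $y$ in the \cite{GHS1} decomposition (here $y=T^2$), not from a Perron truncation error and a small-$|t|$ contribution as you suggest. Your identification of the second error term as a Perron-type $x(\log x)^{2^\kappa}/T$ bound happens to have the right shape, but it is a coincidence of exponents rather than the actual source; and the first error term does not come from the region $|t|\le 1$ via the bound $|L(1+\sigma+it,f)|\ll\sigma^{-2^\kappa}$.

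In short: the proposal correctly situates the result in the \cite{GHS1} framework and correctly notes that $|f(n)|\le\tau(n)^\kappa$ forces growth $|L(1+\sigma+it,f)|\ll\sigma^{-2^\kappa}$, which is why the exponent $2^\kappa$ permeates the statement. But the route you sketch (Perron, then a transfer to a $\sigma$-average) would not close, and is not how the cited equations operate.
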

\begin{proof}
The result follows from  \cite[Eqs.~2.1,~2.2,~\&~2.9]{GHS1} with $y=T^2$.
\end{proof}

To further proceed, let $1\leq T_0\ll (\log x)^{2^\kappa}$, and let $M_f(x,T_0)$ be a real number satisfying
 \begin{equation}\label{eqn:M} 
\max_{|t|\leq T_0}\left|\frac{L\left(1+\frac{1}{\log x}+it, f\right)}{1+\frac{1}{\log x}+it} \right|=e^{-M_f(x,T_0)}(\log x)^{2^\kappa}.
 \end{equation}
 Then, we obtain the following corollary. 
\begin{corollary}\label{Halasz-M}
Suppose $f$ is a multiplicative function such that $|f(n)| \leq \tau(n)^\kappa$ for all $n\in \N$. Let $(\log x)^{2^{\kappa-1}}\ll T_0\ll (\log x)^{2^\kappa}$ for some sufficiently large $x$, and let $M_f(x, T_0)$ be the quantity defined in \eqref{eqn:M}. Then, we have
\[
\frac{1}{x}\sum_{n\leq x} f(n) \ll (M_f(x, T_0)+1)e^{-M_f(x, T_0)} (\log x)^{2^\kappa-1} + \frac{(\log x)^{2^\kappa-1}}{T_0}+\frac{(\log\log x)^{2^\kappa}}{\log x}.
\]
\end{corollary}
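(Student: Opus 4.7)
I would derive Corollary~\ref{Halasz-M} from Theorem~\ref{thm:Halasz} by applying it with $T=T_0\log x$, then estimating the resulting $\sigma$-integral by splitting at the crossover point between two competing bounds on the integrand. The admissibility condition $T\leq x^{9/10}$ holds automatically since $T_0\ll(\log x)^{2^\kappa}$. With this choice of $T$, the two error terms in Theorem~\ref{thm:Halasz} simplify to
\[ \frac{(\log T)^{2^\kappa}}{\log x} \ll \frac{(\log\log x)^{2^\kappa}}{\log x}, \qquad \frac{(\log x)^{2^\kappa}}{T} = \frac{(\log x)^{2^\kappa-1}}{T_0},\]
which supply the last two summands in the claimed bound.

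The bulk of the work is to estimate $\mathcal{I}:=\int_{1/\log x}^{1}\mathcal F(\sigma)\,d\sigma/\sigma$, where $\mathcal F(\sigma):=\max_{|t|\leq T}|L(1+\sigma+it,f)/(1+\sigma+it)|$. Two bounds on $\mathcal F$ are available. First, the pointwise divisor bound $|f(n)|\leq\tau(n)^\kappa\leq\tau_{2^\kappa}(n)$ gives $|L(1+\sigma+it,f)|\leq\zeta(1+\sigma)^{2^\kappa}\ll\sigma^{-2^\kappa}$, and hence $\mathcal F(\sigma)\ll\sigma^{-2^\kappa}$. Second, a Lipschitz-in-$\sigma$ argument obtained by bounding $L'(s,f)$ via the same divisor majorant allows the single-line bound $\mathcal F(1/\log x)\ll e^{-M_f(x,T_0)}(\log x)^{2^\kappa}$, handed to us by the definition of $M_f(x,T_0)$, to be propagated into a uniform bound $\mathcal F(\sigma)\ll e^{-M_f(x,T_0)}(\log x)^{2^\kappa}$ on a right-neighborhood of $\sigma=1/\log x$.

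I would then split $\mathcal I$ at the crossover $\sigma_\star:=e^{M_f(x,T_0)/2^\kappa}/\log x$ where the two bounds balance. The sub-integral on $[1/\log x,\sigma_\star]$ is handled by the Lipschitz bound and contributes $\ll e^{-M_f}(\log x)^{2^\kappa}\cdot\log(\sigma_\star\log x)\ll M_f\,e^{-M_f}(\log x)^{2^\kappa}$, while the sub-integral on $[\sigma_\star,1]$ is handled by the divisor bound and contributes $\ll\int_{\sigma_\star}^{1}\sigma^{-2^\kappa-1}\,d\sigma\ll\sigma_\star^{-2^\kappa}=e^{-M_f}(\log x)^{2^\kappa}$. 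Combining and dividing by $\log x$ produces the first summand $(M_f+1)e^{-M_f}(\log x)^{2^\kappa-1}$, as desired.

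\textbf{Main obstacle.} The delicate point is the Lipschitz step: the definition of $M_f(x,T_0)$ only controls $|L/s|$ on the single line $\Re s=1+1/\log x$ with $|\Im s|\leq T_0$, whereas the integrand requires uniform control over a two-dimensional region of positive width in both $\sigma$ and $\Im s$. Propagation in $\sigma$ is routine (one bounds $L'$ by the divisor majorant), but enlarging the range of $\Im s$ from $T_0$ to $T=T_0\log x$ is more subtle: one must exploit the explicit decay $|1+\sigma+it|^{-1}\ll 1/|t|$ for $|t|>T_0$ to show that the contribution of $|t|\in(T_0,T]$ is absorbed either into the main term or into the error $(\log x)^{2^\kappa-1}/T_0$ already accounted for in the first step.
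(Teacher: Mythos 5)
Your overall strategy matches the paper's: apply Theorem~\ref{thm:Halasz} with $T=T_0\log x$ (the two error terms then become $(\log T)^{2^\kappa}/\log x\ll(\log\log x)^{2^\kappa}/\log x$ and $(\log x)^{2^\kappa-1}/T_0$), reduce the max over $|t|\leq T$ to a max over $|t|\leq T_0$ by noting $|L(1+\sigma+it,f)/(1+\sigma+it)|\ll \sigma^{-2^\kappa}/T_0$ for $T_0<|t|\leq T$ and integrating that away into the $(\log x)^{2^\kappa-1}/T_0$ term, and then run a crossover analysis on the remaining integral. This is exactly what the paper does.

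The gap is in the propagation-in-$\sigma$ step, which you call ``routine'' but which is in fact the crux. Bounding $L'(s,f)$ by the divisor majorant gives $|L'(1+u+it,f)|\leq \sum_n \tau_{2^\kappa}(n)\log n/n^{1+u}\ll u^{-2^\kappa-1}$, and hence
\[
\bigl|L(1+\sigma+it,f)-L(1+\tfrac1{\log x}+it,f)\bigr|\ll \int_{1/\log x}^{\sigma} u^{-2^\kappa-1}\,du \asymp (\log x)^{2^\kappa}-\sigma^{-2^\kappa}.
\]
This error is already of the trivial order $(\log x)^{2^\kappa}$ as soon as $\sigma\geq 2/\log x$; the $e^{-M_f}$ savings is preserved only on a right-neighborhood of $1/\log x$ of width $O(e^{-M_f}/\log x)$, which is exponentially smaller than the crossover $\sigma_\star=e^{M_f/2^\kappa}/\log x$ when $M_f$ is large. (Working with $\log L$ instead of $L$ fares no better: $\Re\log L$ can grow by $2^\kappa\log(\sigma\log x)+O(1)$ as $\sigma$ increases, giving $|L(1+\sigma+it)|\ll e^{-M_f}(\sigma\log x)^{2^\kappa}(\log x)^{2^\kappa}$, which when integrated up to $\sigma_\star$ again reproduces the trivial bound.) The paper's tool is the maximum modulus principle applied to the analytic function $L(s,f)/s$ on the rectangle $\{1+\frac1{\log x}\leq\Re s\leq 2,\ |\Im s|\leq T_0\}$: the interior value at any $(\sigma,t)$ in the relevant range is bounded by the max over the boundary, and the boundary contributions are $e^{-M_f}(\log x)^{2^\kappa}$ (left edge, by definition of $M_f$), $O(1)$ (right edge), and $O((\log x)^{2^\kappa}/T_0)$ (top/bottom edges, where $|s|\geq T_0$), the latter two being absorbed into the stated error terms. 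That global bound, combined with the pointwise divisor bound $\mathcal F(\sigma)\ll\sigma^{-2^\kappa}$, is what feeds the crossover computation that you correctly describe.
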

\begin{proof}
We apply Theorem \ref{thm:Halasz} with $T=T_0\log x$ to get 
 \[
\frac{1}{x}\sum_{n\leq x} f(n) \ll \frac{1}{\log x}\int_{\frac{1}{\log x}}^1\left(\max_{|t|\leq T}\left|\frac{L(1+\sigma+it,f)}{1+\sigma+it}\right|\right)\frac{d\sigma}{\sigma} +O\left(\frac{(\log x)^{2^\kappa-1}}{T_0}+\frac{(\log \log x)^{2^\kappa}}{\log x}\right).\]
Noting that $|f(n)|\leq \tau(n)^\kappa=\tau_2(n)^\kappa\leq \tau_{2^\kappa}(n)$, where $\tau_{\ell}$ is the $\ell$-fold divisor function (i.e., $\tau_{\ell}(n)$ is the coefficient of $n^{-s}$ in the Dirichlet series representation of $\zeta(s)^{\ell}$), and that $\sigma\leq 1$, we have
\[ L(1+\sigma+it, f)\ll \zeta(1+\sigma)^{2^\kappa}\ll \left(\frac{1}{\sigma}\right)^{2^\kappa},\]
and therefore,
\[\max_{|t|\leq T}\left|\frac{L(1+\sigma+it,f)}{1+\sigma+it}\right|=\max_{|t|\leq T_0}\left|\frac{L(1+\sigma+it,f)}{1+\sigma+it}\right|+O\left(\frac{1}{\sigma^{2^{\kappa}}T_0}\right).\]
It follows that 
\[\frac{1}{\log x}\int_{\frac{1}{\log x}}^1\left(\max_{|t|\leq T}\left|\frac{L(1+\sigma+it,f)}{1+\sigma+it}\right|\right)\frac{d\sigma}{\sigma} =\frac{1}{\log x}\int_{\frac{1}{\log x}}^1\left(\max_{|t|\leq T_0}\left|\frac{L(1+\sigma+it,f)}{1+\sigma+it}\right|\right)\frac{d\sigma}{\sigma} +O\left(\frac{(\log x)^{2^\kappa-1}}{T_0}\right),\] and so 
 \[
\frac{1}{x}\sum_{n\leq x} f(n) \ll \frac{1}{\log x}\int_{\frac{1}{\log x}}^1\left(\max_{|t|\leq T_0}\left|\frac{L(1+\sigma+it,f)}{1+\sigma+it}\right|\right)\frac{d\sigma}{\sigma} +O\left(\frac{(\log x)^{2^\kappa-1}}{T_0}+\frac{(\log \log x)^{2^\kappa}}{\log x}\right).\]
Using the maximum modulus principle on the region $\{u+iv: 1+\frac{1}{\log x}\leq u\leq 2,\; |v|\leq T_0\}$, the rest of the proof follows by applying the same argument employed in the proof of \cite[Corollary~1.2]{GHS1} with very minor changes. We remark here that  the lower bound $T_0\gg (\log x)^{2^{\kappa-1}}$ is imposed so that we could use \cite[Lemma~2.7]{GHS1} directly. In practice, we will apply this result with $T_0\asymp (\log x)^{2^{\kappa}}$.
\end{proof}

Let $\phi_f(x, T_0)$ be a real number in the range $|t|\leq T_0$ for which the function 
 \[t\mapsto \left|L\left(1+\frac{1}{\log x}+it, f\right)\right|\]
 attains its maximum, and let $N_f(x, T_0)$ be a real number satisfying
 \begin{equation}\label{eqn:N} 
\left|L\left(1+\frac{1}{\log x}+i\phi_f(x, T_0), f\right)\right| =e^{-N_f(x, T_0)}(\log x)^{2^\kappa}.
 \end{equation}
 As mentioned in the proof of Corollary~\ref{Halasz-M}, we will mostly be interested in the case $T_0\asymp (\log x)^{2^{\kappa}}$. 
We also note that 
\begin{align*}
e^{-M_f(x, T_0)}(\log x)^{2^\kappa}&=\max_{|t|\leq T_0}\left|\frac{L(1+\frac{1}{\log x}+it,f)}{1+\frac{1}{\log x}+it}\right| 
&\leq \max_{|t|\leq T_0}\left|L(1+\frac{1}{\log x}+it,f)\right| 
&=e^{-N_{f}(x,T_0)}(\log x)^{2^\kappa}.
\end{align*}
It then follows that $M_f(x, T_0)\geq N_f(x, T_0)$, and so $(M_f(x, T_0)+1)e^{-M_f(x, T_0)}\leq (N_f(x, T_0)+1)e^{-N_f(x, T_0)}$. Thus, we arrive at the following result.
\begin{corollary}\label{Halasz-L}
Suppose $f$ is a multiplicative function such that $|f(n)| \leq \tau(n)^\kappa$ for all $n\in \N$. Let $(\log x)^{2^{\kappa-1}}\ll T_0\ll (\log x)^{2^\kappa}$ for some sufficiently large $x$, and let $N_f(x, T_0)$ be the quantity defined in \eqref{eqn:N}. Then, we have
\[
\frac{1}{x}\sum_{n\leq x} f(n) \ll (N_f(x, T_0)+1)e^{-N_f(x, T_0)} (\log x)^{2^\kappa-1} + \frac{(\log x)^{2^\kappa-1}}{T_0}+\frac{(\log\log x)^{2^\kappa}}{\log x}.
\]
\end{corollary}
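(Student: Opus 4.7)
The strategy is direct: Corollary~\ref{Halasz-M} already furnishes a bound of exactly the desired shape but expressed in terms of $M_f(x,T_0)$ rather than $N_f(x,T_0)$, so it suffices to compare the two quantities and swap one for the other in the main term, accepting at worst an absolute multiplicative constant in the process.

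First I would note that since $|1+\frac{1}{\log x}+it|\geq 1$ for every real $t$, the pointwise inequality
\[\left|\frac{L(1+\frac{1}{\log x}+it,f)}{1+\frac{1}{\log x}+it}\right|\leq \left|L(1+\frac{1}{\log x}+it,f)\right|\]
holds, and taking the maximum over $|t|\leq T_0$ and comparing with definitions \eqref{eqn:M} and \eqref{eqn:N} gives $e^{-M_f(x,T_0)}\leq e^{-N_f(x,T_0)}$, i.e.\ $M_f(x,T_0)\geq N_f(x,T_0)$. Next, the function $y\mapsto (y+1)e^{-y}$ has derivative $-ye^{-y}$ and is therefore monotone decreasing on $[0,\infty)$; since the trivial estimate $|L(1+\frac{1}{\log x}+it,f)|\ll \zeta(1+\frac{1}{\log x})^{2^\kappa}\ll (\log x)^{2^\kappa}$ forces $N_f(x,T_0)$ to be bounded below by an absolute constant, this monotonicity yields
\[(M_f(x,T_0)+1)e^{-M_f(x,T_0)}\ll (N_f(x,T_0)+1)e^{-N_f(x,T_0)}.\]
Inserting this comparison into the estimate of Corollary~\ref{Halasz-M} immediately produces the claimed bound. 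The argument is essentially mechanical, and I do not foresee any serious obstacle; the only mild point of care is to verify that the monotonicity of $(y+1)e^{-y}$ is being applied on a range where it holds, which is exactly why I check that $N_f(x,T_0)$ is bounded below.
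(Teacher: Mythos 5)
Your proposal is correct and follows exactly the same route the paper takes: deduce $M_f(x,T_0)\geq N_f(x,T_0)$ from $|1+\frac{1}{\log x}+it|\geq 1$, apply the monotonicity of $y\mapsto (y+1)e^{-y}$, and feed the resulting comparison into Corollary~\ref{Halasz-M}. You are in fact slightly more careful than the paper in flagging that the monotonicity argument needs $N_f$ (and $M_f$) to lie where $(y+1)e^{-y}$ is decreasing; for full rigor one should note that the bound $e^{-N_f}\leq \zeta(1+\tfrac{1}{\log x})^{2^\kappa}/(\log x)^{2^\kappa}=1+O(1/\log x)$ actually gives $N_f\geq -O(1/\log x)$, i.e.\ $N_f$ is essentially nonnegative, not merely bounded below by a possibly negative absolute constant.
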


We now generalize \cite[Theorem~2a]{GS2}, which will become useful in the proof of Proposition~\ref{mvfbound} at the end of this section. 
\begin{proposition}\label{GS-decay-thm2a}
Suppose $f$ is a multiplicative function such that $|f(n)| \leq \tau(n)^\kappa$ for all $n\in \N$. Let $\phi=\phi_f(x,(\log x)^{2^\kappa})$ as in \eqref{eqn:N} and assume that $|\phi|\gg (\log x)^{2^{\kappa-1}}$. Then
\[\frac{1}{x}\sum_{n\leq x} f(n) \ll \frac{(\log x)^{2^{\kappa}-1}}{|\phi|-2}+\frac{\left(\log\log x\right)^{2^{\kappa+1}(1-\frac{2}{\pi})+1}}{(\log x)^{1-\frac{2^{\kappa+1}}{\pi}}}+\frac{(\log\log x)^{2^\kappa}}{\log x}.\]
\end{proposition}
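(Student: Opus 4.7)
My plan is to start with Corollary~\ref{Halasz-L} applied with $T_0 \asymp (\log x)^{2^\kappa}$: the third term $(\log\log x)^{2^\kappa}/\log x$ in the target bound is then immediate from that corollary, and the middle error term $(\log x)^{2^\kappa-1}/T_0$ reduces to $1/\log x$, which is absorbed. The substance reduces to bounding $(N_f(x,T_0) + 1)\,e^{-N_f(x,T_0)}\,(\log x)^{2^\kappa-1}$ by the first two terms of the proposition, using the hypothesis $|\phi| \gg (\log x)^{2^{\kappa-1}}$.

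The natural pretentious move is to introduce the shifted function $g(n) := f(n)\,n^{-i\phi}$, which is again multiplicative with $|g(n)| \leq \tau(n)^\kappa$ and whose $L$-function satisfies $L(s,g) = L(s+i\phi, f)$. In particular, the maximum of $|L(1 + 1/\log x + it, g)|$ on $|t| \leq T_0 - |\phi|$ is now attained at $t = 0$, bringing $g$ within the direct reach of Hal\'asz-type estimates. Recovering $\sum f(n)$ from $G(t) := \sum_{n \leq t} g(n)$ via Abel summation
\[\sum_{n \leq x} f(n) \;=\; G(x)\,x^{i\phi} \;-\; i\phi \int_{1}^{x} G(t)\, t^{i\phi - 1}\, dt\]
is what should produce the first term $(\log x)^{2^\kappa - 1}/(|\phi|-2)$: the oscillating factor $t^{i\phi}$ creates cancellation effective at scale $1/|\phi|$, while the size $(\log x)^{2^\kappa - 1}$ is consistent with the trivial normalization $\sum_{n \leq x} \tau(n)^\kappa \asymp x (\log x)^{2^\kappa - 1}$.

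The middle term, with its unusual exponent $1 - 2^{\kappa+1}/\pi$, should arise from a Hal\'asz-Montgomery Hilbert-type inequality applied directly to $f$ in the regime where $f$ pretends strongly to $n^{i\phi}$, following the template of \cite[Theorem~2a and Lemma~7.1]{GS2} and exploiting the divisor-bounded generalizations developed in \cite{GHS1}. In the $1$-bounded case the constant $1 - 2/\pi$ is a signature of Hal\'asz's original bound; the replacement of $2$ by $2^{\kappa+1}$ in our setting reflects the fact that the relevant Dirichlet series is controlled by $\zeta^{2^\kappa}$ rather than by $\zeta$, so the Hilbert-inequality input on the Euler factors acquires an extra power $2^\kappa$. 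Taking the better of the two estimates across $|\phi| \gg (\log x)^{2^{\kappa-1}}$ closes the argument.

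The main obstacle I anticipate is the book-keeping for the divisor-bounded Hal\'asz-Montgomery step: tracking the powers of $\log\log x$ and $\log x$ precisely through the Perron-style contour shifts and the Euler product manipulations, and verifying that the lower threshold $|\phi| \gg (\log x)^{2^{\kappa-1}}$ is sharp enough to guarantee that one of the two regimes always delivers a genuinely non-trivial bound.
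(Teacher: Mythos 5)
Your proposal starts from Corollary~\ref{Halasz-L} with $T_0 \asymp (\log x)^{2^\kappa}$, but this cannot work: with that choice, $N_f(x,T_0)$ coincides with the quantity $N$ defined in \eqref{eqn:N}, and nothing in the hypotheses prevents $N$ from being $O(1)$ (this happens precisely when $f$ pretends strongly to $n^{i\phi}$). In that case $(N+1)e^{-N}(\log x)^{2^\kappa-1} \asymp (\log x)^{2^\kappa-1}$ is the trivial bound and no amount of post-processing reduces it. The subsequent plan---shifting to $g(n)=f(n)n^{-i\phi}$ and recovering $\sum f(n)$ by Abel summation---is a separate route to bounding $\sum f(n)$, not a way to bound the already-fixed quantity $(N_f+1)e^{-N_f}(\log x)^{2^\kappa-1}$; the two steps of your outline do not compose logically. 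Moreover, if one did pursue the Abel/Lipschitz route honestly (as the paper does in Corollary~\ref{cor3.9-mangerel}), the resulting error is $E_\kappa(x) = (\log x)^{2^\kappa-2}(\log\log x)^3$ for $\kappa>1$, which strictly dominates the middle term $(\log x)^{2^{\kappa+1}/\pi - 1}(\log\log x)^{2^{\kappa+1}(1-2/\pi)+1}$ in the claimed estimate (since $2^\kappa(1-2/\pi)>1$ for $\kappa \geq 2$), so this approach cannot yield the proposition as stated.

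The idea you are missing is a \emph{different} choice of $T_0$ in Corollary~\ref{Halasz-L}, namely $T_0 = |\phi|-2$. This deliberately excludes the peak of $|L(1+1/\log x + it, f)|$ at $t=\phi$ from the maximization, and in one stroke produces the first error term $(\log x)^{2^\kappa-1}/(|\phi|-2)$. The decisive tool for the middle term is then not a Hilbert-type mean value inequality but the repulsion estimate of \cite[Lemma~4.1]{GHS1}: because every $t$ with $|t|\leq |\phi|-2$ satisfies $|t-\phi|\geq 2$, that lemma gives $|L(1+1/\log x+it,f)| \ll (\log x)^{2^{\kappa+1}/\pi}(\log\log x)^{2^{\kappa+1}(1-2/\pi)}$ uniformly on the truncated range, which is exactly what makes $(N_f(x,|\phi|-2)+1)e^{-N_f(x,|\phi|-2)}(\log x)^{2^\kappa-1}$ of the size claimed. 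Your instinct about ``pretentious'' shifting and the origin of the exponent $1-2^{\kappa+1}/\pi$ is in the right spirit, but without the $T_0=|\phi|-2$ truncation and the explicit repulsion input, the argument does not close.
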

\begin{proof}
We apply Corollary \ref{Halasz-L} with $T_0=|\phi|-2$ to get
\begin{align*}\frac{1}{x}\sum_{n\leq x} f(n) &\ll (N_f(x,|\phi|-2 )+1)e^{-N_f(x, |\phi|-2)} (\log x)^{2^\kappa-1} + \frac{(\log x)^{2^\kappa-1}}{|\phi|-2}+\frac{(\log\log x)^{2^\kappa}}{\log x}\\&
\ll \frac{1}{\log x}\max_{|t|\leq |\phi|-2}\left|L(1+\frac{1}{\log x}+it,f)\right| \log\left(\frac{(\log x)^{2^\kappa}}{\max_{|t|\leq |\phi|-2}\left|L(1+\frac{1}{\log x}+it,f)\right|}\right)\\&\hspace{2in}+\frac{(\log x)^{2^\kappa-1}}{|\phi|-2}+\frac{(\log\log x)^{2^\kappa}}{\log x}.
\end{align*}
By \cite[Lemma~4.1]{GHS1}, we have 
\begin{align*}
L\left(1+\frac{1}{\log x}+it,f\right)&\ll (\log x)^{\frac{2^{\kappa+1}}{\pi}}\left(\frac{\log x}{1+|t-\phi|\log x}+(\log\log x)^2\right)^{2^{\kappa}(1-\frac{2}{\pi})}\\
&\ll  (\log x)^{\frac{2^{\kappa+1}}{\pi}}\left(\log\log x\right)^{2^{\kappa+1}(1-\frac{2}{\pi})},
\end{align*}
for all $|t|\leq |\phi|-2$. It follows that
\begin{align*}
\frac{1}{x}\sum_{n\leq x} f(n) &\ll  
\frac{1}{\log x}(\log x)^{\frac{2^{\kappa+1}}{\pi}}\left(\log\log x\right)^{2^{\kappa+1}(1-\frac{2}{\pi})} \log\left(\frac{(\log x)^{2^\kappa}}{\max_{|t|\leq |\phi|-2}\left|L(1+\frac{1}{\log x}+it,f)\right|}\right)\\
&\hspace{2in}+\frac{(\log x)^{2^\kappa-1}}{|\phi|-2}+\frac{(\log\log x)^{2^\kappa}}{\log x}\\
&\ll \frac{\left(\log\log x\right)^{2^{\kappa+1}(1-\frac{2}{\pi})+1}}{(\log x)^{1-\frac{2^{\kappa+1}}{\pi}}}+\frac{(\log x)^{2^\kappa-1}}{|\phi|-2}+\frac{(\log\log x)^{2^\kappa}}{\log x},
\end{align*}
as required.
\end{proof}

Next, we require the following version of Lipschitz Theorem.
\begin{thm}[Lipschitz Theorem]\label{prop:lipschitz} 
Suppose $f$ is a multiplicative function such that $|f(n)|\leq \tau(n)^\kappa$. Let $\phi=\phi_f(x,(\log x)^{2^\kappa})$, as given in \eqref{eqn:N}. Then for all $1 \leq w \leq x^{1/3}$
we have
\begin{align*}
& \left| \frac{1}{x} \sum_{n \leq x}f(n)n^{-i\phi} - \frac{1}{x/w} \sum_{n \leq x/w}f(n)n^{-i\phi} \right| \\
&\hspace{2in} \ll \log \left(\frac{\log x}{\log ew}\right) \left(\frac{\log w+(\log\log x)^2}{\log x}\right)^{\min\{2^\kappa(1-\frac{2}{\pi}), 1\}}(\log x)^{2^\kappa-1}. 
\end{align*}
\end{thm}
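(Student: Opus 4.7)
The plan is to reduce the Lipschitz estimate to the divisor-bounded Lipschitz framework of Granville--Harper--Soundararajan by twisting out the phase $\phi$. Set $g(n) = f(n) n^{-i\phi}$. This is multiplicative with $|g(n)| \leq \tau(n)^\kappa$, and its Dirichlet series satisfies $L(s, g) = L(s + i\phi, f)$. By the very choice of $\phi$, the maximum of $|L(1 + 1/\log x + it, g)|$ over $|t| \leq (\log x)^{2^\kappa}$ is now attained at $t = 0$. Since the two partial sums on the left become $\sum_{n \leq x} g(n)$ and $\sum_{n \leq x/w} g(n)$ after this twist, it suffices to prove the claimed Lipschitz bound for $g$ itself, where the extremal frequency sits at the origin.

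From there I would proceed via a contour integral representation. Perron's formula gives
\[
\frac{1}{x} \sum_{n \leq x} g(n) - \frac{1}{x/w} \sum_{n \leq x/w} g(n) = \frac{1}{2\pi i} \int_{(c)} \frac{L(s, g)}{s} \bigl( x^{s-1} - (x/w)^{s-1} \bigr) \, ds + \text{(Perron errors)},
\]
for a suitable abscissa $c$ and truncation height $T \ll x^{9/10}$. Shifting the contour to the line $\Re(s) = 1 + 1/\log x$ and using the elementary estimate $|w^{s-1} - 1| \ll \min\bigl\{1,\, |s - 1| \log w\bigr\}$ captures the Lipschitz gain. The key analytic input is the bound
\[
\Bigl| L\bigl(1 + \tfrac{1}{\log x} + it, g\bigr) \Bigr| \ll (\log x)^{2^{\kappa+1}/\pi} \Bigl(\frac{\log x}{1 + |t|\log x} + (\log\log x)^2\Bigr)^{2^\kappa(1 - 2/\pi)}
\]
from \cite[Lemma 4.1]{GHS1}, which is available precisely because the maximum of $|L(\cdot, g)|$ on the relevant segment is at $t = 0$.

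I would then split the integral into the ranges $|t| \leq 1/\log x$, $1/\log x \leq |t| \leq 1/\log w$, and $1/\log w \leq |t| \leq T$, bounding each piece by pairing the $L$-estimate above with the Lipschitz factor $|w^{s-1} - 1|$. The appearance of $\min\{2^\kappa(1 - 2/\pi),\, 1\}$ in the exponent reflects a case distinction: when $2^\kappa(1 - 2/\pi) \leq 1$ (i.e.\ $\kappa = 1$), the decay of $|L|$ away from $t=0$ is the binding constraint; for $\kappa \geq 2$, the exponent must be capped at $1$, because the Lipschitz factor $|w^{s-1} - 1|$ cannot by itself produce more than one power of $(\log w / \log x)$. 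In the capped regime, the gain is extracted from the Lipschitz factor alone, parallel to the strategy in \cite{GHS1}.

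The main obstacle will be the careful bookkeeping required to match the bound exactly as stated, particularly the balancing of the potentially large $|L|$ near $t = 0$ against the vanishing Lipschitz factor $|w^{s-1} - 1|$ at that same point. The secondary $(\log \log x)^2$ term inside the $L$-bound has to be carried through every range of the split so that it surfaces as part of the quantity $\log w + (\log\log x)^2$ in the conclusion, and the logarithmic prefactor $\log(\log x / \log ew)$ arises from integrating $1/|s|$ across the middle range $1/\log x \leq |t| \leq 1/\log w$. Finally, one must verify that the Perron truncation errors at height $T \ll x^{9/10}$ are absorbed in the stated bound under the hypothesis $w \leq x^{1/3}$, which should follow from the routine estimates used throughout \cite{GHS1}.
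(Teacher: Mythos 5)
Your sketch captures the intended argument: the paper gives no self-contained proof of Theorem~\ref{prop:lipschitz} beyond deferring to the detailed exposition of \cite[Theorem~1.5]{GHS1}, and that argument proceeds via exactly the Perron/contour-integral route you describe, with the twist $g(n)=f(n)n^{-i\phi}$ moving the extremal frequency to the origin, the Lipschitz factor $|w^{s-1}-1|\ll\min\{1,|s-1|\log w\}$, and \cite[Lemma~4.1]{GHS1} as the pointwise $L$-function input. The only care needed in a full write-up is that the untruncated vertical-line Perron integral you display does not converge absolutely on $\Re(s)=1+1/\log x$ and so must be taken in truncated or suitably smoothed form (and the maximum of $|L(\cdot,g)|$ sits at $t=0$ over the shifted window $|t+\phi|\leq(\log x)^{2^\kappa}$ rather than the original one) --- both routine points already addressed in \cite{GHS1}.
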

\begin{proof} The proof is very similar to that of \cite[Theorem~1.5]{GHS1}, and the reader is referred to their detailed exposition. 
\end{proof}

\begin{corollary}\label{cor3.9-mangerel} 
 Let $f$ and $\phi$ be as in Theorem~\ref{prop:lipschitz}.  Then
\begin{equation*}\label{MangerelLemma}
\frac{1}{x}\sum_{n\leq x} f(n) = \frac{x^{i\phi}}{1+i\phi}\cdot \frac{1}{x}\sum_{n\leq x} f(n) n^{-i\phi} +O(E_\kappa(x)),\end{equation*}
where 
\begin{equation}\label{eqn:E}
 E_\kappa(x)=\begin{cases} (\log x)^{-1+4/\pi}(\log\log x)^{5-8/\pi}\,\,\, \text{ if } \kappa=1 \\
(\log x)^{2^\kappa-2}(\log\log x)^3\,\,\, \text{ if } \kappa>1. \end{cases}
\end{equation}
\end{corollary}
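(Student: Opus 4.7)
The plan is to apply Abel summation to the decomposition $\sum_{n\leq x} f(n) = \sum_{n\leq x} (f(n) n^{-i\phi})\cdot n^{i\phi}$. Setting $G(t) = \sum_{n\leq t} f(n) n^{-i\phi}$, Abel's identity gives
\[
F(x) := \sum_{n\leq x} f(n) = G(x)\, x^{i\phi} - i\phi \int_1^x G(t)\, t^{i\phi - 1}\, dt.
\]
The main term of the corollary arises heuristically by replacing $G(t)$ inside the integral with its linear approximation $(G(x)/x)\, t$ and evaluating $\int_1^x t^{i\phi}\, dt = (x^{1+i\phi}-1)/(1+i\phi)$: the right-hand side collapses to $G(x) x^{i\phi}/(1+i\phi)$, which upon dividing by $x$ yields the claimed main term. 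What remains is to estimate the error
\[
E = -\frac{i\phi}{x}\int_1^x \Bigl(G(t) - \tfrac{G(x)}{x}\, t\Bigr)\, t^{i\phi - 1}\, dt + O\!\bigl( x^{-1}(\log x)^{2^\kappa - 1}\bigr),
\]
where the $O$-term absorbs the contribution of the $-1$ in $x^{1+i\phi}-1$.

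After the change of variables $w = x/t$, this error takes the form
\[
E \sim -i\phi\, x^{i\phi}\int_1^x \bigl(H(w) - H(1)\bigr)\, w^{-2 - i\phi}\, dw, \qquad H(w) := \frac{1}{x/w}\sum_{n \leq x/w} f(n)\, n^{-i\phi},
\]
precisely the average controlled by Theorem~\ref{prop:lipschitz}. For $1 \leq w \leq x^{1/3}$ that theorem supplies
\[
|H(w) - H(1)| \ll \log\!\Bigl(\tfrac{\log x}{\log(ew)}\Bigr) \Bigl(\tfrac{\log w + (\log\log x)^2}{\log x}\Bigr)^{\alpha} (\log x)^{2^\kappa - 1},
\]
with $\alpha = \min\{2^\kappa(1 - 2/\pi),\, 1\}$, while the range $w > x^{1/3}$ contributes only $O(x^{-1/3}(\log x)^{2^\kappa-1})$ via the trivial bounds $|H(w)|, |H(1)| \ll (\log x)^{2^\kappa - 1}$ and is negligible.

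On the range $w \in [1, x^{1/3}]$, the weight $w^{-2}$ concentrates the integral near $w = 1$, producing $\int_1^{x^{1/3}} |H(w) - H(1)|\, w^{-2}\, dw \ll (\log x)^{2^\kappa - 1} \cdot \log\log x \cdot ((\log\log x)^2/\log x)^{\alpha}$. Substituting the two values of $\alpha$ recovers the piecewise shape of $E_\kappa(x)$: for $\kappa = 1$ with $\alpha = 2 - 4/\pi$ we obtain $(\log x)^{-1 + 4/\pi}(\log\log x)^{5 - 8/\pi}$, and for $\kappa > 1$ with $\alpha = 1$ we obtain $(\log x)^{2^\kappa - 2}(\log\log x)^3$, matching the claimed error term in each case.

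The main technical difficulty is making the resulting bound uniform in $|\phi|$. Naively the prefactor $-i\phi/x$ introduces an unwanted factor of $|\phi|$, which must be absorbed by exploiting the oscillation of $w^{-i\phi}$ via an integration by parts that produces a compensating $1/(1 + i\phi)$ factor. This interplay between the Lipschitz smoothness of the twisted average $H$ and the oscillation of the weight $w^{-i\phi}$ is the crux of the argument and follows the same strategy used in the $1$-bounded case (cf.~\cite{Mangerel}); the divisor-bounded Lipschitz estimate of Theorem~\ref{prop:lipschitz} allows that strategy to carry through with only the change in the exponent $\alpha$ that governs the two regimes of $E_\kappa(x)$.
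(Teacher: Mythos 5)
Your approach shares the skeleton of the paper's: Abel summation, the substitution $w=x/t$, applying Theorem~\ref{prop:lipschitz} to $H(w)-H(1)$, and integrating against $w^{-2}\,dw$. Your computation of the two regimes of $E_\kappa(x)$ (comparing $2^\kappa(1-2/\pi)$ against $1$) is correct. The real gap is the one you flag yourself at the end: the factor $-i\phi/x$ from Abel's formula leaves you with $|E|\ll|\phi|\,E_\kappa(x)$, not $O(E_\kappa(x))$, and since $|\phi|$ can be as large as $(\log x)^{2^\kappa}$ this is not the stated conclusion. You propose to manufacture a compensating $1/(1+i\phi)$ by integrating by parts against $w^{-2-i\phi}$, but you do not carry this out, and it is not routine: $H(w)-H(1)$ is a rescaled partial-sum function, so transferring the derivative onto it trades the bounded Lipschitz difference you control for the raw jump measure, and the Lipschitz estimate no longer applies directly. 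As written this is a hole, deferred to \cite{Mangerel} rather than closed.

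The paper avoids the problem by a reformulation rather than an oscillation argument. It proves the \emph{reversed} identity
\[
\frac1x\sum_{n\le x}f(n)n^{-i\phi}=\frac{1+i\phi}{x^{1+i\phi}}\sum_{n\le x}f(n)+O\bigl(|\phi|\,E_\kappa(x)\bigr),
\]
in which a stray $|\phi|$ in the error is perfectly acceptable; multiplying by $x^{i\phi}/(1+i\phi)$ then yields the corollary with error $\frac{|\phi|}{|1+i\phi|}E_\kappa(x)\le E_\kappa(x)$. Concretely, one Abel-sums $\sum_{n\le x}f(n)n^{-i\phi}=\int_1^x u^{-i\phi}\,d\bigl(\sum_{n\le u}f(n)\bigr)$, so the $i\phi$ factors cleanly outside the integral, and the Lipschitz input then only needs to control $\frac1x\int_{x/(\log x)^2}^x\bigl(\frac{1}{u^{1+i\phi}}\sum_{n\le u}f(n)-\frac{1}{x^{1+i\phi}}\sum_{n\le x}f(n)\bigr)\,du$ with no loose $\phi$, the range $u\le x/(\log x)^2$ being disposed of trivially via $\sum_{n\le u}|f(n)|\ll u(\log u)^{2^\kappa-1}$. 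Restructuring your argument in this orientation removes the $|\phi|$-uniformity obstruction at no extra cost and turns your sketch into a complete proof.
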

 \begin{proof}
We will show the equivalent statement that  
\[ \frac1x\sum_{n \leq x}f(n)n^{-i\phi} = \frac{1+i\phi}{x^{1+i\phi}}\sum_{n \leq x}f(n) + O(|\phi| E_\kappa(x)).
 \]
By partial summation, we have
\begin{align*}
\frac1x\sum_{n \leq x}f(n)n^{-i\phi} &= \frac1x\int_1^x u^{-i\phi} d\left\{ \sum_{n\leq x} f(n)\right\}= \frac{1}{x^{1+i\phi}}\sum_{n\leq x} f(n) + \frac{i\phi}{x}\int_1^x \frac{1}{u^{1+i\phi}}\sum_{n\leq u}f(n)du.
\end{align*}
We split the integral into two pieces as follows:
\begin{align*}
\int_1^x \frac{1}{u^{1+i\phi}}\sum_{n\leq u}f(n)du &= \int_1^{x/(\log x)^2} \frac{1}{u^{1+i\phi}}\sum_{n\leq u}f(n)du + \int_{x/(\log x)^2}^x \frac{1}{u^{1+i\phi}}\sum_{n\leq u}f(n)du.
\end{align*}
For the first integral, we apply the trivial bound
\begin{equation}\label{eqn:partialbound} \sum_{n\leq x} |f(n)| 
\leq \sum_{n\leq x} \tau_{2^\kappa}(n)\ll x(\log x)^{2^\kappa-1}
\end{equation}
(see \cite{luca-toth} for example) to get
\begin{align*}
\frac{i\phi}{x}\int_1^{x/(\log x)^2} \frac{1}{u^{1+i\phi}}\sum_{n\leq u}f(n)du &\ll \frac{|\phi|}{x}\int_1^{x/(\log x)^2} \frac{1}{u}\sum_{n\leq u}|f(n)|du  \\
&\leq \frac{|\phi|}{x}\int_1^{x/(\log x)^2} (\log u)^{2^\kappa-1} \, du 
\ll |\phi|E_\kappa(x).
\end{align*}
Since $w = x/u$ is in the range of Theorem~\ref{prop:lipschitz}, the second integral is equal to 
\begin{align*}
&\frac{i\phi}{x}  \int_{x/(\log x)^2}^x  \left(  \frac{1}{x^{1+i\phi}}\sum_{n\leq x}f(n) + O\left( \log \left(\frac{\log x}{\log ew}\right) \left(\frac{\log w+(\log\log x)^2}{\log x}\right)^{\min\{2^\kappa(1-\frac{2}{\pi}), 1\}}(\log x)^{2^\kappa-1}  \right)\right)du\\
 = &\frac{i\phi}{x}\left( \frac{1}{x^{1+i\phi}}\sum_{n\leq x}f(n)  \right)\int_{x/(\log x)^2}^x du +  \frac{i\phi}{x}\cdot O\left( \log\log x \left(\frac{(\log\log x)^2}{\log x}\right)^{\min\{2^\kappa(1-\frac{2}{\pi}), 1\}}(\log x)^{2^\kappa-1}\right)\int_{x/(\log x)^2}^x du\\
= & \frac{i\phi}{x^{1+i\phi}}\sum_{n\leq x}f(n) +O(|\phi| E_\kappa(x)).
\end{align*}
Combining the two integrals gives the desired result. 
\end{proof}

We now utilize Proposition~\ref{GS-decay-thm2a} (when $|\phi|$ is large) and Corollary \ref{cor3.9-mangerel} (when $|\phi|$ is small) to get the proposition that will be used in our argument.

\begin{proposition}\label{mvfbound}
Let $f$ be a multiplicative function such that $|f(n)|\leq \tau(n)^\kappa$, and let  $N=N_f(x,(\log x)^{2^\kappa})$  and $\phi=\phi_{f}(x,(\log x)^{2^\kappa})$ for sufficiently large $x$. Then,
\[
\frac{1}{x}\sum_{n\leq x} f(n) \ll (\log x)^{2^\kappa-1} \left(\frac{(N+1)e^{-N}}{1+|\phi|} + \frac{E_\kappa(x)}{(\log x)^{2^\kappa-1}}\right),
\]
where $E_\kappa(x)$ is given in \eqref{eqn:E}.
\end{proposition}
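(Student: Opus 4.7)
The plan is to split on the size of $|\phi|$ and invoke different prior estimates in each range. Throughout the small-$|\phi|$ branch, set $g(n)=f(n)n^{-i\phi}$; since $|g(n)|=|f(n)|\le \tau(n)^\kappa$ and $L(s,g)=L(s+i\phi,f)$, every tool of this section applies to $g$ as well.

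For $|\phi|\le (\log x)^{2^{\kappa-1}}$, first invoke Corollary \ref{cor3.9-mangerel} to reduce to the shifted partial sum,
\[\left|\frac{1}{x}\sum_{n\le x}f(n)\right|\le \frac{1}{|1+i\phi|}\left|\frac{1}{x}\sum_{n\le x}g(n)\right|+O(E_\kappa(x)),\]
and then apply Corollary \ref{Halasz-L} to $g$ with auxiliary parameter $T_0'=(\log x)^{2^\kappa}-|\phi|$. The smallness of $|\phi|$ forces $(\log x)^{2^{\kappa-1}}\ll T_0'\ll(\log x)^{2^\kappa}$, so the corollary applies. Moreover, because $L(s,g)=L(s+i\phi,f)$, the supremum defining $N_g$ equals the supremum of $|L(1+\frac{1}{\log x}+is,f)|$ taken over $s\in[\phi-T_0',\phi+T_0']$; a direct check shows this shifted interval is contained in $[-(\log x)^{2^\kappa},(\log x)^{2^\kappa}]$ and still contains $s=\phi$, so the supremum is realized at $s=\phi$ and equals $e^{-N}(\log x)^{2^\kappa}$, giving $N_g=N$. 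Absorbing the secondary error terms $(\log x)^{2^\kappa-1}/T_0'\asymp 1/\log x$ and $(\log\log x)^{2^\kappa}/\log x$ into $E_\kappa(x)$ then produces the claimed bound after dividing by $|1+i\phi|\asymp 1+|\phi|$.

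For $|\phi|>(\log x)^{2^{\kappa-1}}$, apply Proposition \ref{GS-decay-thm2a} directly to $f$. Its two explicit error terms are $\ll E_\kappa(x)$ by inspection, and the remaining piece satisfies
\[\frac{(\log x)^{2^\kappa-1}}{|\phi|-2}\ll (\log x)^{2^{\kappa-1}-1},\]
which a brief case-check ($\kappa=1$ versus $\kappa\ge 2$) against the defining formula of $E_\kappa(x)$ shows is itself $\ll E_\kappa(x)$. Consequently $|\frac{1}{x}\sum_{n\le x}f(n)|\ll E_\kappa(x)$ throughout this range, and this bound lies a fortiori below the claimed right-hand side, since the summand $(N+1)e^{-N}(\log x)^{2^\kappa-1}/(1+|\phi|)$ is non-negative.

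The main technical hurdle will be calibrating $T_0'$ in the first case so that the shifted argument of $L(s,f)$ stays inside the original window $|s|\le(\log x)^{2^\kappa}$ (this is what pins $N_g$ to $N$ and preserves the $(N+1)e^{-N}$ factor), while still respecting the lower-bound hypothesis of Corollary \ref{Halasz-L}. The threshold $|\phi|\le(\log x)^{2^{\kappa-1}}$ is exactly what makes both constraints simultaneously feasible, and it dovetails with the hypothesis $|\phi|\gg(\log x)^{2^{\kappa-1}}$ of Proposition \ref{GS-decay-thm2a} used in the complementary range.
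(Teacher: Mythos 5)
Your proof is correct and takes essentially the same route as the paper: split on the size of $|\phi|$, invoke Proposition~\ref{GS-decay-thm2a} in the large-$|\phi|$ regime, and combine Corollary~\ref{Halasz-L} applied to $g(n)=f(n)n^{-i\phi}$ with Corollary~\ref{cor3.9-mangerel} in the small-$|\phi|$ regime, using the dilation invariance $L(s,g)=L(s+i\phi,f)$ to identify $N_g$ with $N$.

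The only difference is an implementation choice: the paper splits at $|\phi|\geq\frac12(\log x)^{2^\kappa}$ and takes the fixed window $T_0=\frac12(\log x)^{2^\kappa}$, whereas you split at the much lower threshold $|\phi|>(\log x)^{2^{\kappa-1}}$ and take the $\phi$-dependent window $T_0'=(\log x)^{2^\kappa}-|\phi|$. Both choices keep the shifted window inside $[-(\log x)^{2^\kappa},(\log x)^{2^\kappa}]$ while containing $\phi$, so both pin $N_g=N$. The paper's higher threshold has the advantage that in the large-$|\phi|$ branch the term $(\log x)^{2^\kappa-1}/(|\phi|-2)$ is immediately $O(1/\log x)$; your lower threshold makes that term as large as $(\log x)^{2^{\kappa-1}-1}$, and you then correctly lean on the nontrivial fact that $E_1(x)=(\log x)^{-1+4/\pi}(\log\log x)^{5-8/\pi}\to\infty$ (because $4/\pi>1$) to absorb it when $\kappa=1$. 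This works, but it is a tighter squeeze than the paper's, so the cleaner split point is worth keeping in mind.
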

\begin{proof}
Suppose that $|\phi|\geq \frac{1}{2}(\log x)^{2^\kappa}$. By Proposition~\ref{GS-decay-thm2a}, we have \[\frac{1}{x}\sum_{n\leq x} f(n) \ll \frac{(\log x)^{2^{\kappa}-1}}{(\log x)^{2^\kappa}}+\frac{\left(\log\log x\right)^{2^{\kappa+1}(1-\frac{2}{\pi})+1}}{(\log x)^{1-\frac{2^{\kappa+1}}{\pi}}}+\frac{(\log\log x)^{2^\kappa}}{\log x}\ll \frac{\left(\log\log x\right)^{2^{\kappa+1}(1-\frac{2}{\pi})+1}}{(\log x)^{1-\frac{2^{\kappa+1}}{\pi}}}+\frac{(\log\log x)^{2^\kappa}}{\log x}.\]
It follows that 
\begin{align*}\frac{1}{x}\sum_{n\leq x} f(n) - (\log x)^{2^\kappa-1} \frac{(N+1)e^{-N}}{1+|\phi|}&\ll\frac{\left(\log\log x\right)^{2^{\kappa+1}(1-\frac{2}{\pi})+1}}{(\log x)^{1-\frac{2^{\kappa+1}}{\pi}}}+\frac{(\log\log x)^{2^\kappa}}{\log x} \ll E_{\kappa}(x).
\end{align*}
Hence, we may assume that $|\phi|\leq \frac{1}{2}(\log x)^{2^\kappa}$.
Applying Corollary~\ref{Halasz-L} to the function $g(n)=f(n)n^{-i\phi}$ with $T_0=\frac{1}{2}(\log x)^{2^\kappa}$ gives
\begin{align}\label{eqn:halasz-for-twisted-f}
\frac{1}{x}\sum_{n\leq x} f(n)n^{-i\phi}= \frac{1}{x}\sum_{n\leq x} g(n) &\ll \left(N_g\left(x,\frac{(\log x)^{2^\kappa}}{2}\right)+1\right)e^{-N_g(x,\frac{1}{2}(\log x)^{2^\kappa})} (\log x)^{2^\kappa-1} + \frac{(\log\log x)^{2^\kappa}}{\log x} \nonumber\\
&=(N+1)e^{-N} (\log x)^{2^\kappa-1} + \frac{(\log\log x)^{2^\kappa}}{\log x}.
\end{align}
The last line follows from the fact that $N_g(x,\frac{1}{2}(\log x)^{2^\kappa})= N_{f}(x,(\log x)^{2^\kappa})=N$. Indeed, this can be observed as following: Since $L(s, g)=L(s+i\phi, f)$, we have
\begin{align*} 
\max_{|t|<\frac12(\log x)^{2^\kappa}} \left|L\left(1+\frac{1}{\log x}+it, g\right)\right| &=\max_{|t|<\frac12(\log x)^{2^\kappa}} \left|L\left(1+\frac{1}{\log x}+i(t+\phi), f\right)\right| \\
&=\max_{|t|<(\log x)^{2^\kappa}} \left|L\left(1+\frac{1}{\log x}+it, f\right)\right|.
\end{align*}
Applying Corollary~\ref{cor3.9-mangerel} and \eqref{eqn:halasz-for-twisted-f}, we get
\begin{align*}
\frac{1}{x}\sum_{n\leq x} f(n) &\ll  \left|\frac{x^{i\phi}}{1 + i\phi}\right|\left((N+1)e^{-N} (\log x)^{2^\kappa-1} + \frac{(\log\log x)^{2^\kappa}}{\log x}\right) +O(E_\kappa(x)) \\
&\ll  \frac{(N+1)e^{-N}}{1 + |\phi|} ( \log x)^{2^\kappa-1} + O\left(\frac{(\log\log x)^{2^\kappa}}{\log x}\right) +  O\left(E_\kappa(x) \right).
\end{align*}
The first error term is subsumed into the second one since it is smaller. 
\end{proof}
 
 Before concluding this section, we establish the following lemma, which will play a pivotal role in Section~\ref{sec:proof}.
 
 \begin{lemma}\label{lem:M}
 Let $f$ be a multiplicative function such that $|f(n)| \leq \tau(n)^\kappa$ for all $n \in \N$.  Let  $N=N_f(x,(\log x)^{2^\kappa})$  and $\phi=\phi_{f}(x,(\log x)^{2^\kappa})$ as in \eqref{eqn:N}. Suppose that there exists $x\gg 1$ and $\eta>(\log x)^{-1/100}$ such that
 \begin{equation}\label{eqn:sum_n}
 	\left|\sum_{n\leq x}f(n)\right|\geq \eta x (\log x)^{2^\kappa-1}.
 \end{equation} Then, \[|\phi|\ll\frac{1}{\eta}\quad \text{and}\quad 
 N\leq 2 \log \left(\frac{1}{\eta}\right)+O(1).\] 
 \end{lemma}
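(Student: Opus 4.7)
The plan is to combine the hypothesis \eqref{eqn:sum_n} with the upper bound coming from Proposition~\ref{mvfbound}. Dividing \eqref{eqn:sum_n} by $x(\log x)^{2^\kappa-1}$ and comparing with the conclusion of Proposition~\ref{mvfbound} produces in one stroke the fundamental inequality
\[
\eta \ll \frac{(N+1)e^{-N}}{1+|\phi|} + \frac{E_\kappa(x)}{(\log x)^{2^\kappa-1}},
\]
from which all information about $N$ and $\phi$ will be extracted.

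The first task is to show that the error term $E_\kappa(x)/(\log x)^{2^\kappa-1}$ is $o(\eta)$ and may be absorbed. Inspecting \eqref{eqn:E}, this quotient is bounded by $(\log x)^{-\delta}$ times a power of $\log\log x$, where $\delta=2-4/\pi\approx 0.73$ when $\kappa=1$ and $\delta=1$ when $\kappa\geq 2$. In either case $\delta>1/100$, so the standing assumption $\eta>(\log x)^{-1/100}$ forces $E_\kappa(x)/(\log x)^{2^\kappa-1}=o(\eta)$. This is the only place where the specific hypothesis $\eta>(\log x)^{-1/100}$ enters, and verifying it is the main (if modest) technical check of the argument. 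Once absorbed, one is left with
\[
\eta \ll \frac{(N+1)e^{-N}}{1+|\phi|}.
\]

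To bound $|\phi|$, I would first note that $N\geq 0$: the trivial bound $|L(1+\sigma+it,f)|\leq \zeta(1+\sigma)^{2^\kappa}$ evaluated at $\sigma=1/\log x$ gives $|L(1+\tfrac{1}{\log x}+it,f)|\leq (1+o(1))(\log x)^{2^\kappa}$. Hence $(N+1)e^{-N}\leq 1$, and the inequality above becomes $\eta(1+|\phi|)\ll 1$, i.e.\ $|\phi|\ll 1/\eta$, as desired.

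For the bound on $N$, I would instead drop the $(1+|\phi|)^{-1}$ factor to obtain $(N+1)e^{-N}\gg \eta$, then take logarithms to get $N-\log(N+1)\leq \log(1/\eta)+O(1)$. Using the elementary inequality $\log(N+1)\leq N/2 + O(1)$, valid for all $N\geq 0$ (the difference is maximized at $N=1$), we conclude $N/2\leq \log(1/\eta)+O(1)$, i.e.\ $N\leq 2\log(1/\eta)+O(1)$, which is the required estimate.
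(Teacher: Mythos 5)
Your proposal is correct and follows essentially the same path as the paper: both start from Proposition~\ref{mvfbound}, absorb the error term $E_\kappa(x)/(\log x)^{2^\kappa-1}$ using the hypothesis $\eta>(\log x)^{-1/100}$, and then extract $|\phi|\ll 1/\eta$ and $N\leq 2\log(1/\eta)+O(1)$ from the resulting inequality $\eta(1+|\phi|)\ll (N+1)e^{-N}$. The paper phrases the $N$-bound as $(N+1)e^{-N}\ll e^{-N/2}$, which is just a repackaging of your elementary inequality $\log(N+1)\leq N/2+O(1)$.
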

 \begin{proof} By Proposition~\ref{mvfbound},  we know that 
 \[ \left|\sum_{n\leq x} f(n)\right| \ll x(\log x)^{2^\kappa-1} \left(\frac{(N+1)e^{-N}}{1+|\phi|} + \frac{E_\kappa(x)}{(\log x)^{2^\kappa-1}}\right).
 \]
Therefore, together with our assumption~\eqref{eqn:sum_n},
 we have
\begin{equation}\label{eqn:eta}
\eta\leq \frac{C(N+1)e^{-N}}{1+|\phi|}+\frac{CE_\kappa(x)}{(\log x)^{2^\kappa-1}},
\end{equation}
for some absolute constant $C>0$. Writing the second term on the right-hand side as $A_\kappa(x)$, the inequality~\eqref{eqn:eta} can be written as
\[ (1+|\phi|)(\eta-A_\kappa(x))\leq C(N+1)e^{-N}.\]
Since $\eta\geq 2A_\kappa(x)$ for sufficiently large $x$, we see that
 \begin{equation}\label{eqn:phi_bound} 
 1+|\phi|\leq \frac{C(N+1)e^{-N}}{\eta-A_\kappa(x)}\leq  \frac{2C(N+1)e^{-N}}{\eta}.
 \end{equation}
In particular, $|\phi|\ll \frac{1}{\eta}$ and 
$\eta\ll e^{-\frac{N}{2}}$. It follows that $N\leq 2\log\left(\frac{1}{\eta}\right)+O(1)$ as desired.
\end{proof}

\section{Technical Results}\label{sec;technical}

While the notion of a distance between multiplicative functions makes most sense in
the context of functions taking values in the complex unit disc, we adapt the standard definition (see for example \cite[Section~2]{GS3}) and set the following notation for a distance function associated with multiplicative functions $f$ satisfying \eqref{eqn:divisor-bounded}:
\begin{align} \label{distance} 
 \D^2(f,n^{it};x) = \sum_{p \leq x} \frac{2^\kappa-\Re(f(p)p^{-it})}{p}. 
 \end{align}
 The distance function  $\D^2(f,n^{it};x)$ is related to the Dirichlet series $L(s, f)$ via
 \[ \left|L\left(1+\frac{1}{\log x}+it, f\right)\right|\asymp (\log x)^{2^\kappa}\exp(-\D^2(f, n^{it}; x)), \]
 which can be observed in the following way. We have
  \begin{align}\label{eq: compare L-function and distance}
 \left|L\left(1+\frac{1}{\log x}+it, f\right)\right|&= \exp\left(\sum_{p\leq x}\frac{\Re(f(p)p^{-it})}{p}+O(1)\right)\\
 &=\exp\left(\sum_{p\leq x}\frac{2^\kappa}{p}-\sum_{p\leq x}\frac{2^\kappa-\Re(f(p)p^{-it})}{p}+O(1)\right) \nonumber  \\
 &\asymp (\log x)^{2^\kappa}\exp\left(-\sum_{p\leq x}\frac{2^\kappa-\Re(f(p)p^{-it})}{p}\right) \nonumber  \\
 &=(\log x)^{2^\kappa} \exp(-\D^2(f, n^{it}; x)), \nonumber
  \end{align}
 where the first equality follows from \cite[Lemma 2.2.15]{MangerelThesis}.
 
We note that this distance function is related to 
$N=N_f(x, (\log x)^{2^\kappa})$, defined in \eqref{eqn:N}, in the following way:
\begin{equation}\label{eqn:LvsD}
e^{-N}=\frac{1}{(\log x)^{2^\kappa}}\left|L(1+\frac{1}{\log x}+i\phi,f)\right|\asymp  \exp(-\D^2(f, n^{i\phi}; x))
.\end{equation}

It is known that the standard distance function between 1-bounded multiplicative functions satisfies a triangle inequality. While such a result does not hold in our setting, we next establish a triangle-type inequality satisfied by the function $\D(f, n^{it};x)$ we introduced in \eqref{distance}.

\begin{lemma}\label{lem:triangle_ineq}
	For $j=1,2$, let $f_j$  be a multiplicative function such that $|f_j(n)| \leq \tau(n)^\kappa$, and let $t_j$ be a real number. We have
	\[ \D(f_1, n^{it_1};x_1)+\D(f_2, n^{it_2};x_2)\geq \frac{1}{\sqrt{2^{\kappa}}}\D(f_1f_2, n^{i(t_1+t_2)}; \min\{x_1,x_2\}),\] 
where the distance function on the right-hand side is defined with respect to the condition $|f_1(n)f_2(n)|\leq \tau(n)^{2\kappa}$. 
\end{lemma}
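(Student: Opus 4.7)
The plan is to reduce the claim to the classical triangle inequality for the pretentious distance between multiplicative functions whose prime values lie in the closed unit disc. Since $|f_j(p)|\leq \tau(p)^\kappa=2^\kappa$, the normalized values
\[g_j(p):=2^{-\kappa}f_j(p)p^{-it_j}\]
satisfy $|g_j(p)|\leq 1$, and substituting directly into \eqref{distance} gives
\[\D^2(f_j,n^{it_j};x_j)=2^\kappa\,\D_0^2(g_j;x_j)\quad\text{and}\quad \D^2(f_1f_2,n^{i(t_1+t_2)};x)=2^{2\kappa}\,\D_0^2(g_1g_2;x),\]
where $\D_0^2(g;y):=\sum_{p\leq y}(1-\Re g(p))/p$ and $x=\min\{x_1,x_2\}$. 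Taking square roots and dividing through by $\sqrt{2^\kappa}$, the lemma reduces to the standard $1$-bounded triangle inequality
\[\D_0(g_1;x_1)+\D_0(g_2;x_2)\geq \D_0(g_1g_2;x).\]

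First I would establish the pointwise bound
\[\sqrt{1-\Re(uv)}\leq \sqrt{1-\Re u}+\sqrt{1-\Re v}\qquad(u,v\in\C,\ |u|,|v|\leq 1).\]
Setting $A:=1-\Re u$ and $B:=1-\Re v$, both in $[0,2]$, and squaring, one computes $1-\Re(uv)=A+B-AB+\Im(u)\Im(v)$, so the inequality becomes $\Im(u)\Im(v)\leq AB+2\sqrt{AB}$. The constraint $|u|^2\leq 1$ yields $|\Im u|^2\leq 1-(1-A)^2=A(2-A)$, and similarly $|\Im v|^2\leq B(2-B)$; hence $|\Im(u)\Im(v)|\leq \sqrt{AB(2-A)(2-B)}\leq 2\sqrt{AB}$, which proves the bound.

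To conclude, I would apply the pointwise bound at every $p\leq x$ with $u=g_1(p)$, $v=g_2(p)$, divide by $\sqrt{p}$, and invoke Minkowski's inequality in $\ell^2$ over primes $p\leq x$; this gives $\D_0(g_1g_2;x)\leq \D_0(g_1;x)+\D_0(g_2;x)$. Enlarging the range to $p\leq x_j$ on the right only adds non-negative summands, so $\D_0(g_j;x)\leq \D_0(g_j;x_j)$, completing the reduction.

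The main step is the pointwise inequality on the closed unit disc: on the unit circle it would follow immediately from the half-angle identity $\sqrt{1-\Re e^{i\theta}}=\sqrt{2}\,|\sin(\theta/2)|$ together with $|\sin\tfrac{\alpha+\beta}{2}|\leq |\sin\tfrac{\alpha}{2}|+|\sin\tfrac{\beta}{2}|$, but allowing values in the interior of the disc forces one to use the estimate $|\Im u|^2\leq A(2-A)$ above. The factor $1/\sqrt{2^\kappa}$ in the conclusion arises purely from the scaling mismatch: at each prime $\D(f_1f_2,\cdot)$ normalizes by $2^{2\kappa}$ while each $\D(f_j,\cdot)$ normalizes by $2^\kappa$, producing the $\sqrt{2^\kappa}$ after taking square roots.
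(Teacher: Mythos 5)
Your proposal is correct, and it is essentially the paper's own argument recast slightly: the paper factors $2^\kappa/p$ out of each summand in $(\D_1+\D_2)^2$, which is exactly your normalization $g_j(p)=2^{-\kappa}f_j(p)p^{-it_j}$, and then applies the same combination of Cauchy--Schwarz (your Minkowski step) and the pointwise bound $|\Im u|^2\le(1-\Re u)(1+\Re u)$; your explicit reduction to the $1$-bounded distance $\D_0$ is a cleaner way to organize the same computation but not a different route.
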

\begin{proof} For simplicity, let us denote $\D_j= \D(f_j, n^{it_j};x_j)$ for $j=1, 2$, $x=\min\{x_1,x_2\}$, and $\D=\D(f_1f_2, n^{i(t_1+t_2)}; x)$. Then, we have
	\begin{align*}
		(\D_1+\D_2)^2&
		=\sum_{p \leq x_1} \frac{2^\kappa-\Re(f_1(p)p^{-it_1})}{p}+\sum_{p \leq x_2} \frac{2^\kappa-\Re(f_2(p)p^{-it_2})}{p}\\
		&\hspace{1in} +2\left(\sum_{p \leq x_1} \frac{2^\kappa-\Re(f_1(p)p^{-it_1})}{p}\right)^{1/2}\left(\sum_{p \leq x_2} \frac{2^\kappa-\Re(f_2(p)p^{-it_2})}{p}\right)^{1/2}\\
		&\geq \sum_{p \leq x} \frac{2^{\kappa +1}-\Re(f_1(p)p^{-it_1})-\Re(f_2(p)p^{-it_2})}{p}\\
		&\hspace{1in} +2\left(\sum_{p \leq x} \frac{2^\kappa-\Re(f_1(p)p^{-it_1})}{p}\right)^{1/2}\left(\sum_{p \leq x} \frac{2^\kappa-\Re(f_2(p)p^{-it_2})}{p}\right)^{1/2}\\
		&\geq \sum_{p\leq x}\frac{2^\kappa}{p}\left[2-\Re\left(\frac{f_1(p)}{2^\kappa}p^{-it_1}\right)-\Re\left(\frac{f_2(p)}{2^\kappa}p^{-it_2}\right)\right]\\
		&\hspace{1in}+\sum_{p\leq x}\frac{2^\kappa}{p}\sqrt{2\left(1-\Re\left(\frac{f_1(p)}{2^\kappa}p^{-it_1}\right)\right)}\sqrt{2\left(1-\Re\left(\frac{f_2(p)}{2^\kappa}p^{-it_2}\right)\right)}\\
		&\geq \sum_{p\leq x}\frac{2^\kappa}{p}\left[2-\Re\left(\frac{f_1(p)}{2^\kappa}p^{-it_1}\right)-\Re\left(\frac{f_2(p)}{2^\kappa}p^{-it_2}\right)+\left|\Im\left  (\frac{f_1(p)}{2^\kappa}p^{-it_1}\right)\Im\left(\frac{f_2(p)}{2^\kappa}p^{-it_2}\right)\right| \right], 
	\end{align*}
where the last inequality follows from $\left|\frac{f_j(p)}{2^\kappa}p^{-it_j} \right| \leq 1$ for $j=1, 2$. This fact also gives us
	\begin{align*}
		&2-\Re\left(\frac{f_1(p)}{2^\kappa}p^{-it_1}\right)-\Re\left(\frac{f_2(p)}{2^\kappa}p^{-it_2}\right)+\left|\Im\left  (\frac{f_1(p)}{2^\kappa}p^{-it_1}\right)\Im\left(\frac{f_2(p)}{2^\kappa}p^{-i\phi_2}\right)\right|   \\
		&\hspace{1in} \geq 1-\Re\left(\frac{f_1(p)}{2^\kappa}p^{-it_1}\right)\Re\left(\frac{f_2(p)}{2^\kappa}p^{-it_2}\right)+\Im\left(\frac{f_1(p)}{2^\kappa}p^{-it_1}\right)\Im\left(\frac{f_2(p)}{2^\kappa}p^{-it_2}\right)\\
		&\hspace{1in} =1-\Re\left(\frac{f_1(p)f_2(p))}{2^{2\kappa}}p^{-i(t_1+t_2})\right).
	\end{align*}
	It then follows that
	\begin{align*}
		(\D_1+\D_2)^2&\geq \sum_{p\leq x}\frac{2^\kappa}{p}\left(1-\Re\left(\frac{f_1(p)f_2(p))}{2^{2\kappa}}p^{-i(\phi_1+\phi_2})\right)\right)\\
		&=\frac{1}{2^{\kappa}}\sum_{p\leq x}\frac{2^{2\kappa}-\Re(f_1(p)f_2(p)p^{-i(\phi_1+\phi_2)})}{p}=\frac1{2^{\kappa}}\D^2,
	\end{align*}
	as desired.
\end{proof}

To conclude this section, we exhibit lower bounds of sums of multiplicative functions in terms of the distance function. More precisely, we show that if the distance  $\D(f,n^{i\psi};x)$ is small then the partial sums of $f$ get large in suitable ranges.  
  \begin{proposition}\label{prop}
 Let $f$ be a multiplicative function satisfying $|f(n)|\leq \tau(n)^\kappa$ for all $n\in \N$. Let $\psi=\psi_{f}(x,(\log x)^{2^\kappa})$ be a number in the range $|t|\leq (\log x)^{2^\kappa}$ where the maximum in \eqref{eqn:M} is attained. Let $x\gg 1$ and set $\lambda=\D^2(f, n^{i\psi}; x)+\log(1+|\psi|)+c$ where $c$ is a suitably large constant.  Then, there exists $y\in [x^\gamma, x]$, with $\gamma=1/(\lambda e^\lambda)$, such that 
 \begin{align} \label{eq: lower bound kappa} \left|\sum_{n\leq y} f(n)\right|\gg \frac{\exp(-\D^2(f, n^{i\psi}; x))}{1+|\psi|}y(\log y)^{2^\kappa-1}. \end{align}
 \end{proposition}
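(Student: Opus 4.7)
The plan is to argue by contradiction. Suppose that
$$|S(y, f)| \leq \delta \cdot \frac{\exp(-\D^2(f, n^{i\psi}; x))}{1+|\psi|}\,y(\log y)^{2^\kappa-1}$$
for every $y \in [x^\gamma, x]$, where $\delta$ is a small absolute constant to be chosen. I will contradict the asymptotic
$$\left|\frac{L(1+\tfrac{1}{\log x}+i\psi, f)}{1+\tfrac{1}{\log x}+i\psi}\right| \asymp \frac{(\log x)^{2^\kappa}\exp(-\D^2(f,n^{i\psi};x))}{1+|\psi|}$$
coming from \eqref{eq: compare L-function and distance}. The first step is to pass to the twisted function $g(n)=f(n)n^{-i\psi}$, for which $L(s,g)=L(s+i\psi,f)$; the asymptotic above becomes a lower bound on $|L(1+\tfrac{1}{\log x}, g)|/|1+\tfrac{1}{\log x}|$. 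A variant of Corollary~\ref{cor3.9-mangerel} (with $\psi$ in place of $\phi$) then transfers the hypothesized smallness of $|S(y, f)|$ into smallness of $|S(y, g)|$ over $[x^\gamma, x]$, up to an acceptable multiplicative factor of $1+|\psi|$ and an additive $E_\kappa(y)$ error.

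The principal tool is the Mellin representation
$$\frac{L(1+\tfrac{1}{\log x}, g)}{1+\tfrac{1}{\log x}} = \int_1^\infty \frac{S(y, g)}{y^{2+1/\log x}}\, dy,$$
which I split at $y = x^\gamma$ and $y = x^A$ for a parameter $A$ of order $\lambda$ to be chosen. On $[1, x^\gamma]$, the trivial bound $|S(y, g)| \ll y(\log y)^{2^\kappa-1}$ from \eqref{eqn:partialbound} gives contribution $\ll (\gamma\log x)^{2^\kappa}$, and the specific choice $\gamma = 1/(\lambda e^\lambda)$ forces this to be $o(e^{-\D^2}(\log x)^{2^\kappa}/(1+|\psi|))$ once $c$ is large enough. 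On the central range $[x^\gamma, x^A]$, the Lipschitz Theorem~\ref{prop:lipschitz} applied iteratively to $g$ propagates the smallness of $|S(y, g)|/y$ both downward from $y = x$ to $y = x^\gamma$ and upward from $y = x$ to $y = x^A$ through finitely many steps of ratio at most $y^{1/3}$, yielding a contribution bounded by $O(\delta)$ times the target value. Finally, on $[x^A, \infty)$, the substitution $v = \log y/\log x$ bounds the tail by $(\log x)^{2^\kappa}\int_A^\infty v^{2^\kappa-1}e^{-v}\,dv \ll A^{2^\kappa-1}e^{-A}(\log x)^{2^\kappa}$, which is negligible once $A$ is a sufficiently large multiple of $\lambda$. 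Summing the three contributions and comparing against the asymptotic lower bound on $|L|/|s|$ yields the desired contradiction provided $\delta$ is small enough.

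The main technical difficulty lies in the iterative application of Lipschitz in the central step. Each step with ratio $w$ injects an additive error of order $\log(\log x/\log ew) \cdot (\log w/\log x)^{\min\{2^\kappa(1-2/\pi),\,1\}}(\log x)^{2^\kappa-1}$, so the choice of $w$ and the total number of iterations must be balanced carefully against $\delta$ and the size of $\lambda$; this balance is precisely what dictates the form $\gamma = 1/(\lambda e^\lambda)$ and the requirement that the constant $c$ be taken suitably large. A secondary minor subtlety is the use of $\psi$ rather than $\phi$ in Corollary~\ref{cor3.9-mangerel}; since its proof via partial summation goes through verbatim with any fixed real frequency, the substitution is routine.
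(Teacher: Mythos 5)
The overall strategy (contradiction via a Mellin representation of $L$) is the right one, and your handling of the piece $y \in [1, x^\gamma]$ is essentially correct. But there is a genuine gap in your treatment of the tail $y > x$, and this is precisely where the paper's proof diverges from yours.

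You evaluate the Mellin integral at $\sigma = 1 + \tfrac{1}{\log x}$ and attempt to control the range $[x, x^A]$ by iterating Theorem~\ref{prop:lipschitz}. This cannot work. The Lipschitz error in one step of ratio $w \asymp y^{1/3}$ is
\[
\log\!\left(\tfrac{\log y}{\log ew}\right)\left(\tfrac{\log w + (\log\log y)^2}{\log y}\right)^{\min\{2^\kappa(1-2/\pi),1\}}(\log y)^{2^\kappa-1} \asymp (\log y)^{2^\kappa-1},
\]
which is already the same order of magnitude as the trivial bound $|S(y,g)|/y \ll (\log y)^{2^\kappa-1}$ from \eqref{eqn:partialbound}. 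Iterating does not improve this: the per-step errors accumulate additively, and after reaching $y = x^A$ the total injected error is still $\asymp (\log x^A)^{2^\kappa-1}$. Consequently, even with Lipschitz, the contribution of $[x,x^A]$ to $\int_1^\infty y^{-2-1/\log x}|S(y,g)|\,dy$ is $\asymp (\log x)^{2^\kappa}$, whereas the target you need to beat is $\asymp e^{-\lambda}(\log x)^{2^\kappa}$. Since $\lambda = \D^2(f,n^{i\psi};x) + \log(1+|\psi|) + c$ can be as large as $O(\log\log x)$ --- in fact $\D^2$ alone can be of order $2^\kappa\log\log x$ --- the factor $e^{-\lambda}$ can be polynomially small in $\log x$, and no choice of the absolute constants $\delta$ or $c$ recovers the gap. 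Taking $w$ smaller does not help either: you trade a smaller per-step error for more steps, and the sum remains at the scale of the trivial bound.

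The paper's proof avoids this entirely by shifting the evaluation point to $\sigma = 1 + \tfrac{2\lambda}{\log x}$. With $\delta = \tfrac{2\lambda}{\log x}$, the Mellin integrand $y^{-1-\delta}(\log y)^{2^\kappa-1}$ decays on the scale $y \approx x^{1/(2\lambda)}$, so the tail $y > x$ contributes $\ll e^{-2\lambda}(\log x)^{2^\kappa}/\lambda^{2^\kappa}$ already from the trivial bound, with no appeal to Lipschitz at all. The lower bound on $|L(1+\tfrac{2\lambda}{\log x}+i\psi,f)|$ then requires a short Euler-product computation comparing truncations at $x^{1/(2\lambda)}$ and $x$, yielding an extra factor of $(2\lambda)^{-2^\kappa}$ which matches the same loss on the upper-bound side. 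The choice $\gamma = 1/(\lambda e^\lambda)$ is then set to make the head $[1, x^\gamma]$ negligible against $e^{-\lambda}$, not to enable a Lipschitz telescope. If you want to salvage your approach you would need to incorporate this shift in $\sigma$; without it the tail is fatal.
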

 \begin{proof} The strategy of the proof is to produce contradictory upper and lower bounds on the quantity 
 \[ \left|L\left(1+ \frac{2\lambda}{\log x}+i\psi,f\right)\right|, \]
under the assumption that \eqref{eq: lower bound kappa} fails. 
 	
For the  lower bound, we observe that  \begin{align}\begin{split} \label{eq: lower bound on L-function}
 		\left|L\left(1+ \frac{2\lambda}{\log x}+i\psi,f\right)\right| &= \exp \left( \sum_{p \leq x^{\frac{1}{2\lambda}}} \frac{\Re(f(p)p^{-i\psi})}{p} + O(1) \right)\\  
 		&\geq \exp \left( \sum_{p \leq x^{\frac{1}{2\lambda}}} \frac{\Re(f(p)p^{-i\psi})}{p} - \sum_{x^{\frac1{2\lambda}}<p\leq x} \frac{2^\kappa-\Re(f(p)p^{-i\psi})}{p} + O(1) \right)\\
 		&\geq \exp \left(  \sum_{p \leq x} \frac{\Re(f(p)p^{-i\psi})}{p} - \sum_{x^{\frac{1}{2\lambda} }\leq p \leq x}\frac{2^\kappa}{p}+ O(1) \right) \\ 
 		&= \left(2\lambda\right)^{-2^\kappa} \exp\left(\sum_{p \leq x} \frac{\Re(f(p)p^{-i\psi})}{p} + O(1) \right) \\
 		&= \left(2\lambda\right)^{-2^\kappa} (\log x)^{2^\kappa} \exp(-\D^2(f,n^{i\psi};x)).
\end{split}
 	\end{align}

We now move to the upper bound. Set $\delta:=\frac{2\lambda}{\log x}$. From the Mellin transform representation of Dirichlet series, we get 
 \begin{equation} \label{eqn: integral uuper bound 5.3}	\left|L\left(1+\delta+i\psi,f\right)\right| \leq |1+\delta + i\psi|\left(\int_1^\infty \frac1{y^{2+\delta}}\left|\sum_{n \leq y}f(n)\right|dy\right).  
 \end{equation} 
 Applying \eqref{eqn:partialbound}, the integral on the right-hand side of  \eqref{eqn: integral uuper bound 5.3} is bounded by a constant multiple of \[ \int_1^\infty  y^{-1-\delta} (\log y)^{2^\kappa-1}dy.\] 
Observe that  \[\int y^{-1-\delta} (\log y)^{2^\kappa-1}\; dy=  -\frac{P(\delta\log y)}{\delta^{2^\kappa} y^\delta},\]  where \[P(X) := (2^\kappa-1)!\sum_{k=0}^{2^\kappa-1}\frac{X^k}{k!}.\]
 For contradiction, we now assume that the upper bound \begin{equation} \label{eqn: false assumption upper bound 6.1} \left|\sum_{n\leq y} f(n)\right| \leq  e^{-\lambda} y(\log y)^{2^\kappa}\end{equation} holds for all $y \in [x^\gamma,x]$, 
 and split the integral into three parts, so that \begin{equation*} \label{eqn: anyiderivative three bits}  \int_1^\infty  y^{-1-\delta} (\log y)^{2^\kappa-1}dy= \left[ -\frac{P(\delta\log y)}{\delta^{2^\kappa}y^\delta} \right]^{x^\gamma}_1 + \left[ -e^{-\lambda}\frac{P(\delta\log y)}{\delta^{2^\kappa} y^\delta} \right]^x_{x^\gamma} + \left[ -\frac{P(\delta\log y)}{\delta^{2^\kappa} y^\delta} \right]^\infty_x.
 \end{equation*}  
Noting that $P(0) = (2^\kappa-1)!$ and recalling that $\delta:=\frac{2\lambda}{\log x}$, we get \begin{align*}  \int_1^\infty  y^{-1-\delta} (\log y)^{2^\kappa-1}dy&=\frac{(2^\kappa-1)!}{\delta^{2^\kappa}}
	-\frac{P(\delta \gamma \log x)}{\delta^{2^\kappa} x^{\delta\gamma}} 
	+ e^{-\lambda} \frac{P(\delta \gamma \log x)}{\delta^{2^\kappa} x^{\delta\gamma}} 
	- e^{-\lambda} \frac{P(\delta\log x)}{\delta^{2^\kappa}x^\delta}
	+\frac{P(\delta\log x)}{\delta^{2^\kappa} x^\delta} \\ 
 =&  \frac{(2^\kappa -1)!(\log x)^{2^\kappa}}{(2\lambda)^{2^\kappa}}
 -\frac{(\log x)^{2^\kappa} P(2 \gamma  \lambda )}{(2\lambda)^{2^\kappa} e^{2\lambda \gamma}}
 + e^{-\lambda} \frac{(\log x)^{2^\kappa} P( 2\gamma  \lambda )}{(2\lambda)^{2^\kappa} e^{2 \lambda \gamma}} 
\\&\hspace{1em} - e^{-\lambda} \frac{(\log x)^{2^\kappa} P(  2 \lambda )}{(2\lambda)^{2^\kappa} e^{2 \lambda }} 
 +\frac{(\log x)^{2^\kappa} P(  2 \lambda )}{(2\lambda)^{2^\kappa} e^{2 \lambda }} \\
 =& \frac{e^{-\lambda}(\log x)^{2^\kappa}}{(2 \lambda)^{2^\kappa}}\left(   (2^\kappa-1)! e^\lambda
 -\frac{ e^{\lambda} P(2 \gamma  \lambda )}{ e^{2 \lambda \gamma}}
 +  \frac{ P(2 \gamma  \lambda )}{ e^{2 \lambda \gamma}} - \frac{ P(  2\lambda )}{e^{2 \lambda }} 
 +\frac{e^{\lambda}  P(  2 \lambda ) }{ e^{2 \lambda }}  \right).
 \end{align*} 
We want to verify that the inside of the parenthesis is bounded as $\lambda \to  \infty$. Recall that $\gamma= \frac{1}{\lambda e^\lambda}$ and  $\kappa \geq 1$, so that \[ \frac{ P( 2\gamma  \lambda )}{ e^{2 \lambda \gamma}} - \frac{ P(  2 \lambda )}{e^{2 \lambda }} 
+\frac{e^{\lambda}  P(  2 \lambda ) }{ e^{2 \lambda }} \ll 1.   \] 
For the first two terms in the parenthesis, let $Q(X) := P(X) - (2^\kappa-1)!$. We have \[  (2^\kappa-1)! e^\lambda
-\frac{ e^{\lambda} P( 2\gamma  \lambda )}{ e^{2 \lambda \gamma}} = (2^\kappa-1)! e^{\lambda} (1-e^{-2 \lambda \gamma}) + \frac{e^\lambda Q(2e^{-\lambda})}{e^{2 e^{-\lambda}}} = (2^\kappa-1)! e^{\lambda} (1-e^{-2\lambda \gamma}) + O(1), \] since $Q(X)$ is a polynomial with vanishing constant term. It remains to bound $(2^\kappa-1)! e^{\lambda} (1-e^{-2 \lambda \gamma})$. The Taylor expansion of $e^X$ for $X=o(1)$ gives \[ (2^\kappa-1)! e^\lambda(1-e^{-2 \lambda \gamma}) = (2^\kappa-1)! e^\lambda( 2\lambda\gamma + O((2\lambda\gamma)^2) ) = (2^\kappa-1)! e^{\lambda}(2 e^{-\lambda} + O(e^{-2\lambda})) = O(1).  \] We conclude that 
\[ \left|L\left(1+\frac{2 \lambda }{\log x}+i\psi,f\right)\right|  \ll  \left|1+\frac{2 \lambda }{\log x} + i\psi\right| \frac{e^{-\lambda}(\log x)^{2^\kappa}}{(2\lambda)^{2^\kappa}}.\] 
From the definition of $\lambda$, we have 
\[ \lambda=\D^2(f, n^{i\psi}; x)+\log(1+|\psi|)+c = O(\log \log x), \] 
so that 
\begin{align*}
	\left|L\left(1+\frac{2 \lambda }{\log x}+i\psi,f\right)\right| & \ll \left|1+ i\psi\right| \frac{\exp(-\D^2(f, n^{i\psi}; x))\exp(- \log \left|1+ i\psi\right|-c)(\log x)^{2^\kappa}}{(2\lambda)^{2^\kappa}} \\ 
	&=  \frac{\exp(-\D^2(f, n^{i\psi}; x))\exp(-c)(\log x)^{2^\kappa}}{(2\lambda)^{2^\kappa}}. 
\end{align*}
Comparing the upper bound from this last line with the conclusion of \eqref{eq: lower bound on L-function}, we get 
\[   \frac{\exp(-\D^2(f, n^{i\psi}; x))(\log x)^{2^\kappa}}{(2\lambda)^{2^\kappa}} \ll \frac{\exp(-\D^2(f, n^{i\psi}; x))e^{-c}(\log x)^{2^\kappa}}{(2\lambda)^{2^\kappa}}, \] 
which can be made false for $c$ large enough. It follows that the assumption \eqref{eqn: false assumption upper bound 6.1} is false, and that the result holds. 	
 \end{proof}

 \vskip .2in 
 \section{Proof of Theorem~\ref{thm:main1}} \label{sec:proof}
Now, we will prove the main theorem. For $j=1,2$, we set $\phi_j=\phi_{f_j}(x_j,(\log x_j)^{2^\kappa})$ and $N_j=N_{f_j}(x_j,(\log x_j)^{2^\kappa})$ as in \eqref{eqn:N}.
We also set $f=f_1f_2$ and $\phi=\phi_{f}(x, (\log x)^{2^\kappa})$ with $x=\min\{x_1, x_2\}$. 
By Lemma~\ref{lem:triangle_ineq}, we have
\begin{align*}
\D^2(f_1f_2, n^{i(\phi_1+\phi_2)}; x) &\leq 2^{\kappa}\left[\D(f_1, n^{i\phi_1};x_1)+\D(f_2, n^{i\phi_2};x_2)\right]^2
\\& \leq 2^{\kappa+2}\max_j\left\{\D^2(f_j, n^{i\phi_j};x_j)\right\}.
\end{align*}
Together with \eqref{eqn:LvsD}, it follows that
\begin{equation*}
\exp\left(-\D^2(f_1f_2, n^{i(\phi_1+\phi_2)}; x)\right)\geq \min_j\left\{e^{-2^{\kappa+2}N_j}\right\}.
\end{equation*}
Applying Lemma~\ref{lem:M} yields
\begin{equation}\label{eqn:exp_D} 
\exp\left(-\D^2(f_1f_2, n^{i(\phi_1+\phi_2)}; x)\right)
\geq \left(\eta^{2}\right)^{2^{\kappa+2}}=\eta^{2^{\kappa+3}}.
\end{equation}

 On the other hand, by Proposition~\ref{prop}, there exists $y\in [x^{1/(\lambda e^\lambda)}, x]$ such that
 \begin{align}\label{eqn:f_bound}
 \left|\sum_{n\leq y} f_1(n)f_2(n)\right| &\gg \frac{\exp(-\D^2(f, n^{i\psi};x))y(\log y)^{2^\kappa-1}}{1+|\psi|} \nonumber \\
 &\gg \frac{\exp(-\D^2(f, n^{i(\phi_1+\phi_2)};x))y(\log y)^{2^\kappa-1}}{1+|\phi_1+\phi_2|}, 
 \end{align}
  where we recall that $\lambda=\D^2(f, n^{i\psi}; x)+\log(1+|\psi|)+c$ for some sufficiently large constant $c$. Observe that the last inequality in \eqref{eqn:f_bound} follows from \begin{align*}
 \frac{(\log x)^{2^\kappa}\exp(-\D^2(f, n^{i\psi_f};x))}{\left|1+\frac{1}{\log x}+i\psi_f\right|} &\asymp \frac{\left| L\left(1+\frac{1}{\log x}+i\psi_f, f\right)\right|}{\left|1+\frac{1}{\log x}+i\psi_f\right|}=\max_{|t|\leq (\log x)^{2^\kappa}} \frac{\left| L\left(1+\frac{1}{\log x}+it, f\right)\right|}{\left|1+\frac{1}{\log x}+it\right|} \\
&  \geq \frac{\left| L\left(1+\frac{1}{\log x}+i(\phi_1+\phi_2), f\right)\right|}{\left|1+\frac{1}{\log x}+i(\phi_1+\phi_2)\right|} \\
& \asymp \frac{(\log x)^{2^\kappa}\exp(-\D^2(f, n^{i(\phi_1+\phi_2)};x))}{\left|1+\frac{1}{\log x}+i(\phi_1+\phi_2)\right|}.
 \end{align*}

 Now, putting \eqref{eqn:exp_D} and \eqref{eqn:f_bound} together gives
 \begin{align*}
 \left|\sum_{n\leq y} f_1(n)f_2(n)\right| &\gg  \frac{\eta^{2^{\kappa+3}} y (\log y)^{2^{2\kappa}-1}}{1+|\phi_1+\phi_2|}. \end{align*}
 Finally, since $1+|\phi_1+\phi_2|\leq 1+|\phi_1|+|\phi_2|\ll 1/\eta$ by Lemma \ref{lem:M}, we conclude that 
 \begin{align*}
 \left|\sum_{n\leq y} f_1(n)f_2(n)\right| \gg  \eta^{1+2^{\kappa+3}} y (\log y)^{2^{2\kappa}-1},
 \end{align*}
 which completes the proof by observing that 
 \begin{align*}
 \frac{1}{\lambda e^{\lambda}}&\geq e^{-2\lambda}\\&=\left(\exp(-\D^2(f, n^{i\psi}; x)-\log(1+|\psi|)-c)\right)^2\\&\gg \left(\frac{\exp(-\D^2(f, n^{i(\phi_1+\phi_2)};x))}{\left|1+\frac{1}{\log x}+i(\phi_1+\phi_2)\right|}\right)^2\\&\gg \left(\eta^{1+2^{\kappa+3}}\right)^2,
 \end{align*} 
 and setting $\xi=C\eta^{1+2^{\kappa+3}}$, for some absolute positive constant $C$. \hfill $\square$

 \section*{Acknowledgements}
This project originated from Women~In~Numbers~6 Research Workshop that took place at Banff International Research Station in March 2023. The authors express their utmost gratitude to the organizers for the invaluable opportunity provided by the workshop. The third author would also like to thank Oleksiy Klurman for answering a question related to this work.

\bibliographystyle{siam}
\bibliography{references}

\end{document}